\documentclass[a4paper,11pt,reqno]{amsart}
\usepackage{amssymb}
\usepackage{amsmath}
\usepackage{ifthen}
\usepackage{enumitem}
\usepackage[dvips]{graphicx}
\usepackage[a4paper,margin=0.85in]{geometry}
\nonstopmode \numberwithin{equation}{section}
\usepackage{amssymb}
\usepackage{ifthen}

\usepackage{amsmath}
\usepackage[T1]{fontenc} %skandit
\usepackage{comment}

\theoremstyle{plain}
\newtheorem{thm}{Theorem}
\numberwithin{thm}{section}
\newtheorem{cor}{Corollary}
\numberwithin{cor}{section}
\newtheorem{lem}{Lemma}
\numberwithin{lem}{section}
\newtheorem{prop}{Proposition}
\newtheorem{Qsn}{Question} [section]

\newtheorem{conj}{Conjecture}

\theoremstyle{definition}
\newtheorem{defn}{Definition}[section]

\newtheorem{prob}{Problem}
\newtheorem{rem}{Remark}[section]

\newcounter{minutes}
\divide\time by 60
\newcounter{hours}
\multiply\time by 60
\addtocounter{minutes}{-\time}
\newcounter {own}
\def\theown {\thesection       .\arabic{own}}

\newenvironment{pf}[1][]{%
	\vskip 3mm
	\noindent
	\ifthenelse{\equal{#1}{}}%
	{{\slshape Proof. }}%
	{{\slshape #1.} }%
}%
{\qed\bigskip}

\theoremstyle{plain}
\newtheorem{Thm}{Theorem}

\numberwithin{equation}{section}
\def\be{\begin{equation}}
	\def\ee{\end{equation}}

\newcommand{\bee}{\begin{enumerate}}
	\newcommand{\eee}{\end{enumerate}}

\newcommand{\blem}{\begin{lem}}
	\newcommand{\elem}{\end{lem}}
\newcommand{\bthm}{\begin{thm}}
	\newcommand{\ethm}{\end{thm}}
\newcommand{\bcor}{\begin{cor}}
	\newcommand{\ecor}{\end{cor}}
\newcommand{\beg}{\begin{examp}}
	\newcommand{\eeg}{\end{examp}}
\newcommand{\begs}{\begin{examples}}
	\newcommand{\eegs}{\end{examples}}
\allowdisplaybreaks
\newcommand{\bdefn}{\begin{defn}}
	\newcommand{\edefn}{\end{defn}}

\newcommand{\bprob}{\begin{prob}}
	\newcommand{\eprob}{\end{prob}}
\newcommand{\bei}{\begin{itemize}}
	\newcommand{\eei}{\end{itemize}}

\newcommand{\bcon}{\begin{conj}}
	\newcommand{\econ}{\end{conj}}
\newcommand{\bcons}{\begin{conjs}}
	\newcommand{\econs}{\end{conjs}}
\newcommand{\bprop}{\begin{prop}}
	\newcommand{\eprop}{\end{prop}}
\newcommand{\br}{\begin{rem}}
	\newcommand{\er}{\end{rem}}
\newcommand{\brs}{\begin{rems}}
	\newcommand{\ers}{\end{rems}}
\newcommand{\bo}{\begin{obser}}
	\newcommand{\eo}{\end{obser}}
\newcommand{\bos}{\begin{obsers}}
	\newcommand{\eos}{\end{obsers}}
\newcommand{\bpf}{\begin{pf}}
	\newcommand{\epf}{\end{pf}}
\newcommand{\ba}{\begin{array}}
	\newcommand{\ea}{\end{array}}
\newcommand{\beq}{\begin{eqnarray}}
	\newcommand{\beqq}{\begin{eqnarray*}}
		\newcommand{\eeq}{\end{eqnarray}}
	\newcommand{\eeqq}{\end{eqnarray*}}

\begin{document}
	\title{Gabriel's and Frazer's problems for weighted Bergman spaces and their applications}

	\author{Himadri Halder}
	\address{Himadri Halder,
		Department of Mathematics and Computing,
		Indian Institute of Technology (ISM) Dhanbad,
		Dhanbad-826004, Jharkhand, India.}
	\email{himadrihalder@iitism.ac.in}
	
	\author{Rohit Kumar}
	\address{Rohit Kumar,
		Department of Mathematics,
		Indraprastha Institute of Information Technology Delhi,
		New Delhi-110020, India.}
	\email{rohitk12798@gmail.com}

	\subjclass[{AMS} Subject Classification:]{Primary 31A05, 30H20, 30H05}
	\keywords{ Weighted Bergman spaces; Gabriel's problem, Frazer's problem; Integral inequalities, Convex curves; M\"obius invariant spaces, Harmonic mappings}
	
	%\def\thefootnote{}
	%\footnotetext{ {\tiny File:~\jobname.tex,
			%		printed: \number\year-\number\month-\number\day,
			%		\thehours.\ifnum\theminutes<10{0}\fi\theminutes }
		%} \makeatletter\def\thefootnote{\@arabic\c@footnote}\makeatother

	\begin{abstract}
		In this paper, we investigate Gabriel's and Frazer's problems for analytic and complex-valued harmonic weighted Bergman spaces. More precisely, we establish weighted integral inequalities of the form
		\[
		\int_C |f(z)|^p(1-|z|^2)^{\alpha+1}\,|dz|
		\leq
		K_{p,\alpha,C}
		\int_{\mathbb D}|f(z)|^p(1-|z|^2)^\alpha\,dA(z),
		\]
		where $f$ is an analytic or complex-valued harmonic function on the unit disk $\mathbb D$ and $C$ is an arbitrary convex curve contained in $\mathbb{D}$. The corresponding problem was first studied by Gabriel [Proc. Lond. Math. Soc. 28 (1928), 121--127] for analytic Hardy spaces, where the inequality holds for every $0<p<\infty$. In contrast, the harmonic Hardy space analogue was recently shown to fail whenever $0<p\le1$. We prove that this phenomenon does not occur in the weighted harmonic Bergman setting by establishing Gabriel's inequality throughout the full range $0<p<\infty$.
		We further study Frazer's problem for circles and for the union of two intersecting diameters. As important applications of our main results, we derive Gabriel-type and Frazer-type inequalities for the analytic and harmonic M\"obius invariant spaces $Q(n,p,\alpha)$ and $Q_h(n,p,\alpha)$.
	\end{abstract}
	
	\maketitle
	\pagestyle{myheadings}
	\markboth{Himadri Halder and  Rohit Kumar}{Gabriel's and Frazer's problems for analytic and harmonic weighted Bergman spaces}
	
	\section{\textbf{Introduction and Main results}}
	
	\subsection{Gabriel's and Frazer's Problems in Analytic Hardy Spaces}
	Let $\mathbb{D} = \{z \in \mathbb{C} : |z| < 1\}$ denote the open unit disk, and let $\mathbb{T} = \{z \in \mathbb{C} : |z| = 1\}$ be the unit circle. For a function $f$ analytic in $\mathbb{D}$ and $0 < p < \infty$, the integral mean $M_p(r, f)$ is defined as
	\[
	M_p(r, f) = \left( \frac{1}{2\pi} \int_0^{2\pi} |f(re^{i\theta})|^p \, d\theta \right)^{\frac{1}{p}}, \quad 0 \leq r < 1.
	\]
	The function $f$ is said to belong to the classical Hardy space $H^p$ if $M_p(r, f)$ remains bounded as $r \to 1^-$. Since every function in $H^p$ possesses a radial limit almost everywhere on $\mathbb{T}$, the Hardy norm is naturally expressed in terms of boundary integrals. This boundary integral structure has led to the development of numerous inequalities comparing the size of analytic functions over different geometric subsets of the unit disk. For a comprehensive account of Hardy spaces, we refer the reader to the monograph by Duren \cite{Duren-Hp-book}.
	\vspace{1mm}
	
	Integral inequalities have long played a central role in complex analysis, geometric function theory, and harmonic analysis. Among these, inequalities comparing the integral of an analytic function over different subsets of the unit disk have attracted sustained interest because of their close connections with growth estimates, extremal problems, coefficient estimates, and geometric properties of analytic mappings. One of the earliest and most influential results in this direction is the classical Riesz--Fejér inequality, which asserts that for every $f\in H^p$, $0<p<\infty$,
	$$
	\int_{-1}^{1}|f(x)|^{p}\,dx
	\le
	\frac12\int_{0}^{2\pi}|f(e^{i\theta})|^{p}\,d\theta,
	$$
	where the constant $1/2$ is sharp. Besides its intrinsic importance, the Riesz--Fejér inequality has become one of the fundamental tools in the study of Hardy spaces and has inspired numerous subsequent developments. 	  
	\vspace{2mm}
	
	The influence of the Riesz--Fej\'er theorem extends far beyond its original formulation. Beckenbach \cite{Beckenbach-1938} showed that the inequality remains valid when $|f|^p$ is replaced by an arbitrary positive function whose logarithm is subharmonic, thereby revealing that the phenomenon is fundamentally connected with subharmonicity rather than analyticity alone.
	\vspace{1mm}
	
	Motivated by these developments, Gabriel \cite{Gabriel-1928} investigated geometric extensions of the Riesz--Fej\'er inequality by replacing the diameter with an arbitrary convex curve contained in the unit disk. His celebrated theorem, now regarded as the origin of Gabriel's problem, is stated below (see also the survey by Granados \cite{Granados-1999}).
	
	\begin{Thm}[Gabriel, \cite{Gabriel-1928}] \label{thm-A}
		Let $f \in H^p$ for $p > 0$. If $C$ is any convex curve in $\mathbb{D}$, then
		\[
		\int_C |f(z)|^p \, |dz| \leq 2 \int_{\mathbb{T}} |f(z)|^p \, |dz|.
		\]
		The constant $2$ is the best possible.
	\end{Thm}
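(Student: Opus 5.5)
Write $u=|f|^p$. This function is subharmonic, and in fact $\log u$ is subharmonic. Since $f\in H^p$, $u$ has a least harmonic majorant $U$ on $\mathbb D$. $U$ is the Poisson integral of the boundary function $|f(e^{i\theta})|^p$, because $f$ factors as $Bg$ with $g$ zero-free and $|f|^p\le |g|^p$, and $|g|^{p}=|g^{p/2}|^2$ with $g^{p/2}\in H^2$. Hence $u\le U=P[\mu]$ with $d\mu=|f(e^{i\theta})|^p\,d\theta$.

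By Fubini,
\[
\int_C |f|^p\,|dz|\le \int_C P[\mu](z)\,|dz| = \int_{\mathbb T}\Big(\int_C P(z,\zeta)\,|dz|\Big)\,|f(\zeta)|^p\,|d\zeta| ,
\]
where $P(z,\zeta)=\frac{1}{2\pi}\frac{1-|z|^2}{|\zeta-z|^2}$. The theorem therefore reduces to a purely geometric estimate: for every convex curve $C\subset\mathbb D$ and every $\zeta\in\mathbb T$,
\[
\int_C \frac{1-|z|^2}{|\zeta-z|^2}\,|dz|\le 4\pi .
\]

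I would prove this bound in three steps.

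First, $\frac{1-|z|^2}{|\zeta-z|^2}|dz|$ is comparable to the harmonic measure of a boundary arc seen from $z$. Concretely, $\frac{1-|z|^2}{|\zeta-z|^2}=\operatorname{Re}\frac{\zeta+z}{\zeta-z}$. I would use the Möbius map $w=\frac{\zeta+z}{\zeta-z}$ (a Cayley map) sending $\mathbb D$ to the right half-plane $\{\operatorname{Re}w>0\}$, with $\zeta\mapsto\infty$. Then $|dw|=\frac{2|dz|}{|\zeta-z|^2}$ and $1-|z|^2=\frac{4\operatorname{Re} w}{|w+1|^2}$. The integrand becomes roughly $\frac{2\operatorname{Re}w}{|w+1|^2}\,|dw|$ along the image curve.

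Second, convexity has to be used. I would use convexity of $C$ directly in the original disk, since the image is convex in $\mathbb D$ but not in $w$. The standard device is Crofton/projection. A convex closed curve lies in the closure of the region it bounds. For a fixed positive weight $\rho(z)$ that is "monotone" in a suitable sense, the $\rho$-length of a convex curve is at most the $\rho$-length of any curve enclosing it. Here the natural enclosing curve is $\mathbb T$ itself, traversed with multiplicity. The weighted length of $\mathbb T$ is degenerate since $1-|z|^2=0$ there, so instead I would write the integrand via harmonic measure. The quantity $\frac{1}{2\pi}P(z,\zeta)$-integral over $C$ equals the limit as $\epsilon\to0$ of $\epsilon^{-1}\int_C\omega(z,I_\epsilon)\,|dz|$, where $I_\epsilon$ is an arc of length $\epsilon$ about $\zeta$. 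The level sets of $\omega(\cdot,I_\epsilon)$ are circular arcs through the endpoints of $I_\epsilon$. A convex curve crosses each such arc at most twice. Using the coarea formula along the orthogonal family (circles tangent at $\zeta$, i.e. horocycles, in the limit), each horocycle $\{P=t\}$ is met at most twice by $C$. The decomposition $|dz|\le$ (component along horocycle normals) gives $\int_C P\,|dz|\le \int_C P\,|dP|/|\nabla P|\cdot(\ldots)$. I expect this step to be the main obstacle: getting the sharp constant requires a careful choice of coordinates. In the half-plane picture, after normalizing $\zeta$ to $\infty$ via $z\mapsto i\frac{\zeta+z}{\zeta-z}$, horocycles become horizontal lines and $P$ becomes $\operatorname{Im}$ times a factor. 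The cleanest route I would try is to parametrize $C$ by the angle $\theta$ of the tangent. Each direction is attained once, and the integral becomes bounded by twice a one-dimensional integral over a chord. By the Riesz--Fej\'er computation for the diameter through $\zeta$, such a chord integral is at most $2\pi$.

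Third, combining gives the constant $2$. Sharpness would come from letting $C$ degenerate to a doubly traversed diameter, i.e. thin convex curves around $[-1,1]$. Then the Riesz--Fej\'er inequality with sharp constant $1/2$, counted twice, yields $2\cdot\frac12\cdot 2=2$ relative to the $\int_{\mathbb T}$ normalization, using $f$ concentrating near $\pm1$.
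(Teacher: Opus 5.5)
The paper gives no proof of this statement: Theorem~\ref{thm-A} is quoted from Gabriel and used only as a black box, applied to the dilations $f_r$ in the proof of Theorem~\ref{Theorem-2.1}. Judged on its own, your argument fails at the reduction step, because the kernel bound you reduce to is false. Take $\zeta=1$ and let $C$ be the ellipse $\{x^2/r^2+y^2/\delta^2=1\}$. As $\delta\to0$,
\[
\int_C\frac{1-|z|^2}{|1-z|^2}\,|dz|\longrightarrow 2\int_{-r}^{r}\frac{1+x}{1-x}\,dx=4\log\frac{1+r}{1-r}-4r .
\]
This already exceeds $4\pi$ at $r=0.99$ and tends to $+\infty$ as $r\to1$. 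In your half-plane coordinates it is the divergence of $\int_0^\infty\frac{2t}{(1+t)^2}\,dt$ along the image of the diameter through $\zeta$. For the same reason, your claim that the chord integral through $\zeta$ is at most $2\pi$ ``by the Riesz--Fej\'er computation'' is wrong: that integral is infinite. The Riesz--Fej\'er inequality is a statement about $H^p$ functions, not a bound on the Poisson kernel integrated along a chord.

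A better geometric argument cannot repair this, because the loss occurs at the majorization $|f|^p\le P[|f^*|^p]$. Let $\phi(e^{i\theta})=\frac{\rho^2-1}{|\rho-e^{i\theta}|^2}$ with $\rho>1$. It is positive and continuous, so $\phi=|F^*|^p$ for some outer $F\in H^\infty$. Moreover $P[\phi](z)=\frac{\rho^2-|z|^2}{|\rho-z|^2}$ and $\int_{\mathbb T}\phi\,|dz|=2\pi$. Letting $\rho\downarrow1$ and running $C$ over the thin ellipses above, $\int_C P[\phi]\,|dz|$ is unbounded. So your intermediate inequality $\int_C P[|f^*|^p]\,|dz|\le 2\int_{\mathbb T}|f|^p\,|dz|$ fails for genuine $H^p$ functions. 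Once you pass to the harmonic majorant, you are in effect proving Gabriel's inequality for all positive harmonic functions, which is false; this is the same phenomenon that makes the harmonic analogue fail for $0<p\le1$.

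A correct proof must keep the analytic structure. Your reduction $|f|^p\le|G|^2$ with $G=g^{p/2}\in H^2$ is a good first move. The $H^2$ case must then use the analyticity of $G$ itself, not merely the subharmonicity of $|G|^2$. Examples are the Cauchy-theorem path-deformation argument behind Riesz--Fej\'er, applied to products like $G(z)\overline{G(\bar z)}$, or Beckenbach-type log-subharmonicity.

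Your sharpness computation also has a spurious factor $2$. For a doubly traversed diameter, Riesz--Fej\'er gives $2\int_{-1}^{1}|f(x)|^p\,dx\le\int_0^{2\pi}|f(e^{i\theta})|^p\,d\theta=\int_{\mathbb T}|f|^p\,|dz|$. That family therefore yields a ratio of at most $1$ and cannot show that $2$ is best possible.
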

	
	The theory was substantially enriched by Gabriel and Frazer through a series of influential papers devoted to special geometric configurations (see \cite{Gabriel-1928-a}-\cite{Gabriel-1928-c}, \cite{Frazer-1932}-\cite{Frazer-1945}). A particularly noteworthy achievement is Frazer's theorem for circular curves, which establishes that the optimal constant admits a dramatic improvement when the underlying convex curve is a circle \cite{Frazer-1945}.
	
	\begin{Thm}[Frazer, \cite{Frazer-1945}] \label{thm-B}
		If $f \in H^p$ for some $p > 0$ and $C$ is a circle in $\mathbb{D}$, then
		\[
		\int_C |f(z)|^p \, |dz| \leq \int_{\mathbb{T}} |f(z)|^p \, |dz|.
		\]
	\end{Thm}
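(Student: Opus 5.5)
The plan is to prove Frazer's inequality by reducing it to a statement about the Poisson kernel. Since $\log|f|$ is subharmonic, the proof will work for any positive $u$ with $\log u$ subharmonic, in the spirit of Beckenbach. We may assume $f\in H^p$ is not identically zero. Let $C=\{|z-a|=\rho\}$ with $|a|+\rho<1$.

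\textbf{Step 1 (majorant).} Put $u=|f|^p$. Then $u$ is subharmonic in $\mathbb{D}$, and $u(z)\le P[u^*](z)$, the Poisson integral of the boundary function $u^*=|f^*|^p\in L^1(\mathbb{T})$. This is standard for Hardy spaces: factor $f=BF$ with $B$ a Blaschke product and $F$ zero-free, note $F^{p/2}\in H^2$, so that
\[
|f|^p\le |F^{p/2}|^2\le P[|F^{p/2}|^2]=P[|f^*|^p].
\]
The last equality holds on the boundary because $|B^*|=1$ a.e.

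\textbf{Step 2 (Fubini).} By Fubini,
\[
\int_C|f|^p\,|dz|\le \int_0^{2\pi}|f^*(e^{it})|^p\left(\frac1{2\pi}\int_C P(z,e^{it})\,|dz|\right)dt,
\]
where $P(z,e^{it})=\dfrac{1-|z|^2}{|e^{it}-z|^2}$. So it suffices to prove, for every $\zeta\in\mathbb{T}$,
\[
\int_C \frac{1-|z|^2}{|\zeta-z|^2}\,|dz|\le 2\pi .
\]

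\textbf{Step 3 (the kernel estimate; main obstacle).} This is a pure geometry/computation problem and is where the real work lies. Parametrize $z=a+\rho e^{i\theta}$. Write $w=\zeta-a$; note $|w|>\rho$ since $\zeta$ lies outside the closed disk bounded by $C$.

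The denominator is $|w-\rho e^{i\theta}|^2$. The numerator $1-|a|^2-\rho^2-2\rho\,\mathrm{Re}(\bar a e^{i\theta})$ is a trigonometric polynomial of degree one. Hence the integral is
\[
\rho\int_0^{2\pi}\frac{A-2\rho\,\mathrm{Re}(\bar a e^{i\theta})}{|w-\rho e^{i\theta}|^2}\,d\theta,\qquad A=1-|a|^2-\rho^2 .
\]
This can be evaluated by residues, or via the Poisson kernel of the disk of radius $|w|$. One uses
\[
\frac1{2\pi}\int_0^{2\pi}\frac{d\theta}{|w-\rho e^{i\theta}|^2}=\frac1{|w|^2-\rho^2},\qquad
\frac1{2\pi}\int_0^{2\pi}\frac{e^{i\theta}\,d\theta}{|w-\rho e^{i\theta}|^2}=\frac{\rho/\bar w}{|w|^2-\rho^2}.
\]
Together these give
\[
\frac{2\pi\rho}{|w|^2-\rho^2}\Bigl(1-|a|^2-\rho^2-2\rho^2\,\mathrm{Re}\bigl(\bar a/\bar w\bigr)\Bigr).
\]

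Now $|w|^2=|\zeta-a|^2=1-2\mathrm{Re}(\bar a\zeta)+|a|^2$. The required inequality, after clearing the positive denominator, becomes
\[
\rho\bigl(1-|a|^2-\rho^2\bigr)-2\rho^3\,\mathrm{Re}\bigl(\bar a/\bar w\bigr)\le |w|^2-\rho^2 .
\]
I would verify this by minimizing over $\zeta$ for fixed $a$ and $\rho$. By rotation we may take $a\ge0$ real, so the only free parameter is the angle of $\zeta$. The worst case should be $\zeta=1$, i.e.\ $|w|=1-a$, where the inequality reduces to a one-variable polynomial inequality in $a,\rho$ using $a+\rho<1$.

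Sanity checks support this reduction. At $a=0$ the integral equals exactly $2\pi\rho(1-\rho^2)/(1-\rho^2)=2\pi\rho\le2\pi$. In the limiting case of internally tangent circles, $a+\rho\to1$, one expects equality to be approached, consistent with the constant $1$ being sharp. Handling the $\mathrm{Re}(\bar a/\bar w)$ term cleanly, perhaps through a Möbius change of variable sending $C$ to a circle centred at $0$ and tracking the distortion of arc length against the Poisson kernel, is the step I expect to require the most care. The Möbius substitution $z\mapsto\varphi_b(z)$ mapping $C$ to a centred circle (such $b$ always exists) is the alternative I would try if the direct computation becomes messy.

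Once the kernel bound holds for all $\zeta$, Step 2 yields $\int_C|f|^p\,|dz|\le\int_0^{2\pi}|f^*|^p\,dt=\int_{\mathbb{T}}|f|^p\,|dz|$, which is Theorem~\ref{thm-B}.
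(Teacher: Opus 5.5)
The kernel estimate in Step~3 is false, and the failure is structural rather than computational. Since $z\mapsto P(z,\zeta)$ is harmonic, the mean value property gives directly
\[
\int_C \frac{1-|z|^2}{|\zeta-z|^2}\,|dz| \;=\; 2\pi\rho\,\frac{1-|a|^2}{|\zeta-a|^2}.
\]
Your residue formula reduces to exactly this via the identity $|w|^2+2\Re(\bar a w)=1-|a|^2$. Hence your cleared inequality is equivalent to $\rho(1-|a|^2)\le|\zeta-a|^2$ for all $\zeta\in\mathbb{T}$, that is, to $\rho\le\frac{1-|a|}{1+|a|}$. This is strictly stronger than $|a|+\rho<1$.

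For $a=\tfrac12$, $\rho=0.45$, $\zeta=1$, the kernel integral equals $2\pi\cdot 1.35$. For internally tangent circles, its value at $\zeta=a/|a|$ tends to $2\pi(1+|a|)$, not $2\pi$. Both sides of your cleared inequality vanish there only because of the common factor $|w|^2-\rho^2$, which is what misled your sanity check.

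A sharper computation cannot rescue this. Steps~1--2 use only the majorization $|f|^p\le P[|f^*|^p]$, which every positive harmonic function also satisfies. For $u=P[\phi_n]$ with $\phi_n$ an approximate identity at $a/|a|$, one gets $\int_C u\,|dz|\big/\int_{\mathbb{T}}u\,|dz|\to\rho\frac{1+|a|}{1-|a|}>1$. So this route gives at best the constant $\rho\frac{1+|a|}{1-|a|}<2$, a Gabriel-type bound, and never $1$. The log-subharmonicity you invoke at the start is never actually used.

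The missing idea is to perform the M\"obius substitution on the function rather than on the kernel. Take $\varphi\in\mathrm{Aut}(\mathbb{D})$ mapping $\{|w|=s\}$ onto $C$, by sending the hyperbolic centre of $C$ to $0$. Since $\varphi'$ is zero-free and analytic on a neighbourhood of $\overline{\mathbb{D}}$, we have $G=(f\circ\varphi)(\varphi')^{1/p}\in H^p$. The relation $|dz|=|\varphi'(w)|\,|dw|$ then gives
\[
\int_C|f|^p|dz|=\int_{|w|=s}|G|^p|dw|, \qquad \int_{\mathbb{T}}|f|^p|dz|=\int_{\mathbb{T}}|G|^p|dw|.
\]
Monotonicity of $M_p(r,G)$ yields $\int_{|w|=s}|G|^p|dw|=2\pi s\,M_p^p(s,G)\le s\int_{\mathbb{T}}|G|^p|dw|$. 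This proves the theorem, even with the better constant $s<1$. Equivalently, your Steps~1--2 now work when applied to $G$, since for a centred circle the kernel integral is exactly $2\pi s$.

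This is precisely where analyticity enters. Because $\log|\varphi'|$ is harmonic, log-subharmonicity survives multiplication by $|\varphi'|$, whereas plain subharmonicity does not, as the harmonic counterexample above shows. The paper quotes this theorem from Frazer without proof, so there is no in-paper argument to compare against.
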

	
	Prior to the circle case, Frazer also investigated the integration of analytic functions along a pair of intersecting diameters, producing a fascinating genralization of the Riesz-Fej\'er theorem. 
	
	\begin{Thm}[Frazer, \cite{Frazer-1934_regular}] \label{thm-C}
		If $f \in H^p$ for $p > 0$, and $D_0, D_1$ are any two diameters of $\mathbb{T}$ intersecting at an acute angle $\theta$ ($0 \leq \theta \leq \pi/2$), then
		\[
		\int_{D_0 + D_1} |f(z)|^p \, |dz| \leq \frac{1}{\sin\frac{\theta}{2} + \cos\frac{\theta}{2}} \int_{\mathbb{T}} |f(z)|^p \, |dz|.
		\]
	\end{Thm}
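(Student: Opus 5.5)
The plan is to reduce the statement to the case $p=2$ and then imitate the contour-integration proof of the Riesz--Fej\'er inequality, adapted to two lines through the origin. Write $f=BF$ with $B$ a Blaschke product and $F\in H^p$ zero-free, so that $|f|\le|F|$ in $\mathbb{D}$ and $|f|=|F|$ a.e.\ on $\mathbb{T}$. Replacing $F$ by $F^{p/2}\in H^2$ shows it suffices to prove
\[
\int_{D_0+D_1}|g(z)|^2\,|dz|\le \frac{1}{\sin\frac{\theta}{2}+\cos\frac{\theta}{2}}\int_{\mathbb{T}}|g(z)|^2\,|dz|,\qquad g\in H^2 .
\]
By a rotation we may assume $D_0=e^{-i\theta/2}\mathbb{R}\cap\mathbb{D}$ and $D_1=e^{i\theta/2}\mathbb{R}\cap\mathbb{D}$, symmetric about the real axis. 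By the usual dilation argument (apply the result to $g(rz)$ and let $r\to1^-$, using Fatou's lemma on the left) we may also assume $g$ is analytic on $\overline{\mathbb{D}}$.

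For each diameter $D_j=e^{i\varphi_j}(-1,1)$ I introduce the reflected function $g_j^*(z)=\overline{g(e^{2i\varphi_j}\bar z)}$ and set $G_j=g\,g_j^*$. On $D_j$ we have $G_j=|g|^2$, since $e^{2i\varphi_j}\bar z=z$ there. Moreover $|g_j^*|$ on $\mathbb{T}$ is just $|g|$ composed with an isometric reflection of the circle. The two lines cut $\mathbb{D}$ into four sectors, two of opening $\theta$ and two of opening $\pi-\theta$.

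Applying Cauchy's theorem to $G_j(z)\,dz$ on the half-disks bounded by $D_j$ expresses $\int_{D_j}|g|^2|dz|$ as an integral of $G_j$ over a semicircular arc. Doing this naively for each diameter only gives the constant $1/2+1/2=1$. To gain, I would instead integrate suitable analytic combinations over the sector boundaries. Concretely, on each sector $S$ with radial sides along $D_0$ and $D_1$, Cauchy's theorem applied to $(\lambda G_0+\mu G_1)\,dz$ gives one relation. Here $\lambda,\mu$ are unimodular constants chosen so that on the two radial sides the integrand is a positive multiple of $|g|^2|dz|$, up to the cross terms $G_1$ on $D_0$ and $G_0$ on $D_1$. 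Summing over the four sectors with weights and taking real parts should express
\[
\int_{D_0+D_1}|g|^2|dz|
\]
as $\mathrm{Re}$ of arc integrals $\int_{\mathbb{T}}(\lambda G_0+\mu G_1)\,dz$. These are then bounded via $|G_j|\le\frac12(|g|^2+|g_j^*|^2)$ and the invariance of $\int_{\mathbb{T}}|g_j^*|^2|dz|=\int_{\mathbb{T}}|g|^2|dz|$.

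The key step, and the main obstacle, is choosing the phases and weights so that two things happen. First, the cross terms on the diameters must either cancel or contribute with a favourable sign. Second, the resulting arc coefficients, which come from a pointwise maximum over the arc of $|a\,e^{i\psi_1}+b\,e^{i\psi_2}|$-type expressions, must produce exactly the factor $1/(\sin\frac{\theta}{2}+\cos\frac{\theta}{2})$. Since $\sin\frac{\theta}{2}+\cos\frac{\theta}{2}=\sqrt2\cos(\frac{\theta}{2}-\frac{\pi}{4})$, I expect the optimal choice to weight the $\theta$-sectors by $\sin\frac{\theta}{2}$ and the $(\pi-\theta)$-sectors by $\cos\frac{\theta}{2}$, the projections of the bisector directions. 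I would first test this on the extremal configuration $\theta=\pi/2$ (constant $1/\sqrt2$) and on $\theta=0$ (recovering Riesz--Fej\'er with $D_0=D_1$, constant $1$ for the doubled diameter) to fix the weights. If the contour bookkeeping fails, the fallback is to verify the $H^2$ inequality directly on Taylor coefficients. There the left side is the quadratic form with kernel
\[
\frac{(1+(-1)^{m+n})\,\bigl(e^{i(m-n)\varphi_0}+e^{i(m-n)\varphi_1}\bigr)}{m+n+1},
\]
and one would bound its norm by a weighted Schur test, Hilbert-matrix style.
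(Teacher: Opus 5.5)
This statement is only quoted in the paper, from Frazer's 1934 work, and no proof is given there. Your proposal therefore has to stand on its own, and it does not.

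Your preliminary reductions are fine: inner--outer factorization to reach $p=2$, rotation, and dilation with Fatou's lemma. But the inequality itself is never proved. The step you call the main obstacle is the entire content of the theorem, and the mechanism you describe cannot deliver it.

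Take $D_0=e^{-i\theta/2}\mathbb{R}$ and $D_1=e^{i\theta/2}\mathbb{R}$ as you do. On $D_1$ the cross term is $G_0(z)=g(z)\overline{g(e^{-i\theta}\bar z)}$. Here $e^{-i\theta}\bar z$ runs over the line $e^{-3i\theta/2}\mathbb{R}$, which for $0<\theta<\pi/2$ is neither $D_0$ nor $D_1$. So this term has no sign, and it can only be removed by cancellation between the two sectors adjacent along each radius. With your weights $\sin\frac{\theta}{2}$ and $\cos\frac{\theta}{2}$ on adjacent sectors, that cancellation is impossible unless $\theta=\pi/2$.

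Suppose the cross terms do cancel. Then the coefficient of $G_1$ must agree across every radius of $D_0$, and the coefficient of $G_0$ across every radius of $D_1$. Hence the coefficient of $G_0$ is a constant $a$ on one half-disk cut off by $D_0$ and a constant $b$ on the other; likewise $c,d$ for $G_1$ and $D_1$. Your summed identity is then just the sum of the two ordinary Riesz--Fej\'er identities, with $|a-b|\ge 1$ and $|c-d|\ge 1$ forced by normalizing the diameter terms.

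On $\mathbb{T}$ the phases of $G_0(\zeta)$ and $G_1(\zeta)$ are not under your control. So the only usable bound is $|\lambda|\,|G_0|+|\mu|\,|G_1|$ with $\lambda\in\{a,b\}$ and $\mu\in\{c,d\}$; the $|ae^{i\psi_1}+be^{i\psi_2}|$ heuristic has nothing to act on. Now apply $|G_j(\zeta)|\le\frac12\bigl(|g(\zeta)|^2+|g(\rho_j\zeta)|^2\bigr)$, where the reflection $\rho_j$ in $D_j$ swaps the two half-disks of $D_j$. The coefficient of $|g(\zeta)|^2$ becomes $\frac{|a|+|b|}{2}+\frac{|c|+|d|}{2}\ge 1$. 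So the scheme can never beat the trivial constant $1$, which is just Riesz--Fej\'er applied to each diameter separately.

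An argument in your spirit would need, on each sector $S$ of opening $\beta$, an analytic $\Psi_S$ equal to $|g|^2$ on both bounding radii, with $\sum_S\int_{\partial S\cap\mathbb{T}}|\Psi_S|\,|dz|\le\int_{\mathbb{T}}|g|^2\,|dz|$. Cauchy's theorem would then give $|A-e^{i\beta}B|\le\int_{\partial S\cap\mathbb{T}}|\Psi_S|\,|dz|$ for the two radial integrals $A,B\ge 0$. Combined with $|A-e^{i\beta}B|\ge\sin\frac{\beta}{2}\,(A+B)$, this yields exactly your weights and Frazer's constant.

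This can be carried out for $\theta=\pi/2$. Split $g$ into even and odd parts, which decouple both on $D_0\cup D_1$ and on $\mathbb{T}$. Then $G_0=\pm|g|^2$ on $D_1$, and at $\beta=\pi/2$ the sign is harmless. This gives the constant $1/\sqrt2$, which is attained, for example, by $(1-z^4)^{-1/4}$ when $D_0,D_1$ are the coordinate axes.

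For $0<\theta<\pi/2$, neither $G_0$ nor $G_1$ has this property, and you offer no replacement. The Taylor-coefficient fallback is only named: no Schur weight is proposed. At $\theta=\pi/2$ the kernel splits into Hankel blocks, the worst being $(-1)^{k+l}/(k+l+\frac14)$ with norm $\pi/\sin\frac{\pi}{4}$. For general $\theta$ it does not split this way, so obtaining exactly $2\pi/(\sin\frac{\theta}{2}+\cos\frac{\theta}{2})$ is again the whole problem.
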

	Observe that when $\theta=0$, the two diameters coincide, and the inequality reduces to the classical Riesz--Fej\'er inequality. This result further illustrates how geometric considerations naturally lead from the Riesz--Fej\'er inequality toward Gabriel's problem.
	
	\subsection{Gabriel's and Frazer's Problems in Harmonic Hardy Spaces}
	The remarkable success of Gabriel's problem in analytic Hardy spaces naturally led researchers to investigate whether analogous inequalities remain valid for harmonic mappings. This question is particularly significant because complex-valued harmonic mappings arise naturally in geometric function theory, quasiconformal mappings, minimal surfaces, nonlinear elasticity, and various boundary value problems for elliptic partial differential equations. Extending classical analytic inequalities to harmonic mappings has therefore become one of the central themes of modern function theory.
	\vspace{1mm}
	
	A complex-valued function $f=u+iv$ is said to be harmonic in $\mathbb{D}$ if both $u$ and $v$ are real-valued harmonic functions. Every harmonic mapping admits the canonical decomposition $f=h+\overline{g}$, where $h$ and $g$ are analytic in $\mathbb{D}$. For $0<p<\infty$, the harmonic Hardy space $h^p$ consists of all harmonic mappings $f$ satisfying \[ \|f\|_{h^p} := \sup_{0<r<1} M_p(r,f) <\infty. \] As in the analytic case, every function in $h^p$ possesses radial boundary values almost everywhere on $\mathbb{T}$.
	\vspace{1mm}
	
	Unlike analytic functions, harmonic mappings lack several key properties that are fundamental to the proofs of the classical Riesz--Fej\'er and Gabriel's theorem. In particular, for $0<p<1$, the function $|f|^p$ is generally not subharmonic. As a result, the classical methods based on the subharmonicity of analytic functions cannot be directly adapted to the harmonic setting. Therefore, extending Riesz--Fej\'er and Gabriel's result from analytic to harmonic Hardy spaces presents substantial analytical difficulties. 
	\vspace{1mm}
	
	A harmonic analogue of the classical Riesz--Fej\'er inequality for $1<p\leq 2$ was established by Kayumov, Ponnusamy, and Kaliraj \cite{Kayumov-Ponnusamy-Kaliraj-2020}.
	\begin{Thm} \cite{Kayumov-Ponnusamy-Kaliraj-2020} \label{thm-D}
		If $f\in h^p$ for $1<p\le2$, then \[ \int_{-1}^{1}|f(x)|^p\,dx \le \frac12\sec^p\!\left(\frac{\pi}{2p}\right)\, \int_{\mathbb{T}}|f(z)|^p\,|dz|. \] 
		The inequality is sharp. 
	\end{Thm}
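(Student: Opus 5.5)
The plan is to reduce the harmonic inequality to the analytic Riesz--Fej\'er inequality, with the analytic function built by conformally mapping the disk onto a half-plane. This is the same route used for the M.~Riesz conjugate-function theorem with the sharp constant $\sec(\pi/2p)$ (Pichorides). Since $f\in h^p$ with $p>1$, write $f=h+\overline{g}$. By density and Fatou's lemma it suffices to treat $f$ harmonic on a neighbourhood of $\overline{\mathbb D}$, so that $h,g$ are analytic there.

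The key idea is a pointwise subharmonic minorant. For $1<p\le 2$ the plan is to construct a subharmonic function $\Phi(h,g)$ of the pair of analytic functions such that
\[
|h+\overline g|^p \le \sec^p\!\left(\tfrac{\pi}{2p}\right)\Phi \quad\text{on } (-1,1),
\]
and such that $\Phi\le |h+\overline g|^p$ on $\mathbb T$ in an averaged sense. Cleaner still is to reduce to real-valued pieces. On the real axis $\overline{g(x)}$ is, up to replacing $g$ by $g^*(z)=\overline{g(\bar z)}$, the value of an analytic function. Indeed, $\overline{g(x)}=g^*(x)$, so on $(-1,1)$ we have $f(x)=F(x)$ with
\[
F:=h+g^*,
\]
which is analytic. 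The classical Riesz--Fej\'er inequality then gives
\[
\int_{-1}^1|f(x)|^p\,dx\le \tfrac12\int_{\mathbb T}|h+g^*|^p\,|dz|.
\]

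It then remains to compare the boundary norms, i.e.\ to show
\[
\int_{\mathbb T}|h+g^*|^p \le \sec^p\!\left(\tfrac{\pi}{2p}\right)\int_{\mathbb T}|h+\overline g|^p .
\]
On $\mathbb T$ we have $g^*(e^{i\theta})=\overline{g(e^{-i\theta})}$. So if we set $u=\operatorname{Re}$-type combinations, $h+g^*$ and $h+\overline g$ are related through reflection and conjugation of the coefficients of $g$. Write $h=\sum a_nz^n$ and $g=\sum b_nz^n$, and normalise $g(0)=0$. Then
\[
h+\overline g=\sum_{n\ge0} a_n e^{in\theta}+\sum_{n\ge1}\overline{b_n}e^{-in\theta}.
\]
On the other hand, $h+g^*$ evaluated at $e^{i\theta}$ is $\sum a_ne^{in\theta}+\sum \overline{b_n}e^{in\theta}$. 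This is exactly the analytic function whose boundary values arise from $h+\overline g$ by the Riesz projection-type map
\[
P_+\phi+\overline{\,\cdot\,}\text{ flip},\qquad \text{equivalently}\qquad \phi\mapsto \phi + i\tilde\phi
\]
applied componentwise, up to constant terms. The needed estimate is therefore a sharp bound for the operator $\phi\mapsto P_+\phi+\overline{P_-\phi}$-reflected on $L^p(\mathbb T)$. Splitting $f$ into real and imaginary parts $u,v$, the boundary function of $h+g^*$ equals $(u+i\tilde u)+i(v+i\tilde v)$ modulo constants and reflection. So we need
\[
\|(u+i\tilde u)+i(v+i\tilde v)\|_p\le \sec(\pi/2p)\,\|u+iv\|_p .
\]

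The main obstacle is this last step: the sharp vector-valued form of Pichorides' theorem. For real $u$ alone, the bound $\|u+i\tilde u\|_p\le\sec(\pi/2p)\|u\|_p$ for $1<p\le2$ is classical. To handle the complex combination I would first try to prove it via Pichorides' subharmonic function $|w|^p-\sec^p(\pi/2p)\,|\operatorname{Re}w|^p\cdots$, adapted to the pair, using the Essén/Verbitsky extension to complex-valued (Hilbert-space-valued) $u$. That extension gives $\|P\phi\|_p\le\sec(\pi/2p)\|\phi\|_p$ for the analytic-projection operator in the range $1<p\le 2$.

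Sharpness would be checked with the extremal functions for Pichorides' inequality, namely real parts of conformal maps onto sectors of opening $\pi/p$ approaching the boundary angle. These make both the Riesz--Fej\'er step and the conjugate step asymptotically tight simultaneously, provided they are real-symmetric, which such maps are.
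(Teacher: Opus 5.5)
The paper does not prove this statement; it is quoted from Kayumov--Ponnusamy--Kaliraj, so your proposal has to stand on its own. Your first reduction is correct. On $(-1,1)$ one has $f=h+g^*$ with $g^*(z)=\overline{g(\bar z)}$, so the analytic Riesz--Fej\'er inequality gives $\int_{-1}^1|f|^p\,dx\le\frac12\int_{\mathbb T}|h+g^*|^p\,|dz|$.

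\textbf{The gap.} It lies in the boundary comparison $\|h+g^*\|_{L^p(\mathbb T)}\le\sec(\pi/2p)\|f\|_{L^p(\mathbb T)}$. You leave this open as the ``main obstacle'', and you reduce it to a false inequality.
\begin{itemize}
\item The boundary function of $F:=h+g^*$ is not $(u+i\tilde u)+i(v+i\tilde v)$ with $u+iv=f|_{\mathbb T}$. It depends on $f$ only through the even part $\Psi(\zeta)=\tfrac12\bigl(f(\zeta)+f(\bar\zeta)\bigr)$; indeed $\Psi(z)=\tfrac12\bigl(F(z)+F(\bar z)\bigr)$ in $\mathbb D$.
\item The inequality you say is needed, $\|(u+i\tilde u)+i(v+i\tilde v)\|_p\le\sec(\pi/2p)\|u+iv\|_p$, is false. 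For $f(z)=z$ the left side is $\|2e^{i\theta}\|_p=2$, while the right side is $\sec(\pi/2p)<2$ for $3/2<p\le2$. Meanwhile $h+g^*=z$ has norm $1$.
\item For the same reason, no bound on the Riesz projection can deliver the constant. The map $\phi\mapsto(u+i\tilde u)+i(v+i\tilde v)=2P_+\phi-\hat\phi(0)$ already has norm $2>\sqrt2=\sec(\pi/4)$ on complex $L^2$.
\end{itemize}

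\textbf{What is missing.} You need an $\ell^2$ (vector-valued) estimate combined with the reflection symmetry.
\begin{enumerate}
\item Write $F=A+iB$ with $A,B$ having real Taylor coefficients. Then $\Psi=\operatorname{Re}A+i\operatorname{Re}B=\frac F2+\overline{\frac{F^*}{2}}$.
\item On $\mathbb T$ one has $|F(\zeta)|^2+|F(\bar\zeta)|^2=2\bigl(|A(\zeta)|^2+|B(\zeta)|^2\bigr)$, because the cross terms $\pm2\operatorname{Im}(A\bar B)$ cancel.
\item Since $p\le2$, the power-mean inequality gives $\int_{\mathbb T}|F|^p\le\int_{\mathbb T}\bigl(\tfrac12(|F(\zeta)|^2+|F(\bar\zeta)|^2)\bigr)^{p/2}|d\zeta|$.
\item Apply Kalaj's sharp inequality $\int_{\mathbb T}(|H|^2+|G|^2)^{p/2}\le(1-|\cos(\pi/p)|)^{-p/2}\int_{\mathbb T}|H+\bar G|^p$, quoted in the proof of Lemma~\ref{Lemma-2.1}. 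For $1<p\le2$ its constant is $2^{-p/2}\sec^p(\pi/2p)$. Take $H=F/2$, $G=F^*/2$, so that $\operatorname{Re}(H(0)G(0))=|F(0)|^2/4\ge0$. This bounds the integral in step 3 by $\sec^p(\pi/2p)\int_{\mathbb T}|\Psi|^p$.
\item By Minkowski, $\int_{\mathbb T}|\Psi|^p\le\int_{\mathbb T}|f|^p$.
\end{enumerate}
This is the $\mathbb R^2$-valued Ess\'en--Verbitsky inequality you gesture at. It must be applied to $(\operatorname{Re}A,\operatorname{Re}B)$ after symmetrising, not to $P_+$.

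\textbf{A shorter route.} You can avoid $g^*$ altogether. On $(-1,1)$, $|f|\le|h|+|g|$, and $(|h|+|g|)^p$ is log-subharmonic, so Beckenbach's form of Riesz--Fej\'er applies. Then $(|h|+|g|)^p\le2^{p/2}(|h|^2+|g|^2)^{p/2}$ together with the same $\ell^2$ inequality (normalising $g(0)=0$) gives the constant.

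\textbf{Sharpness.} Your sketch with the real-symmetric maps $F=((1+z)/(1-z))^\gamma$, $\gamma\uparrow1/p$, is sound. These are positive on $(-1,1)$, and $|\operatorname{Re}F|=\cos(\gamma\pi/2)|F|$ on $\mathbb T$.
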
 
	Melentijevi\'c and Bo\v{z}in \cite{Melentijevic-Bozin-2021} later extended this result to the range $p\ge2$, showing that the same inequality holds with the sharp constant $1/2$ on the right side. 
	\vspace{1mm}
	
	Building upon these results, Das and Kaliraj \cite{Das-Kaliraj-2022} established the harmonic analogue of Frazer's theorem concerning the union of two intersecting diameters. 
	\begin{Thm}\cite{Das-Kaliraj-2022} \label{thm-E}
		Let $f\in h^p$, where $p>1$. If $D_0$ and $D_1$ are two diameters of $\mathbb{T}$ intersecting at an acute angle $\theta$, then \[ \int_{D_0\cup D_1}|f(z)|^p\,|dz| \le A_p(\theta) \int_{\mathbb{T}}|f(z)|^p\,|dz|, \] where \[
		A_p(\theta):=
		\begin{cases}
			\displaystyle
			\sec^{p}\!\left(\frac{\pi}{2p}\right)
			\frac{1}{\sin\frac{\theta}{2}+\cos\frac{\theta}{2}},
			& 1<p\le2,\\[3mm]
			\displaystyle
			\frac{2}{\sin\frac{\theta}{2}+\cos\frac{\theta}{2}},
			& p\ge2.
		\end{cases}
		\]
	\end{Thm}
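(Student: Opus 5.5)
The plan is to reduce Theorem~\ref{thm-E} to Frazer's analytic Theorem~\ref{thm-C}. I would do this through a pointwise majorization of $|f|^p$ by a quantity to which Frazer's argument applies. Frazer's proof, like the Riesz--Fej\'er and Gabriel arguments, uses analyticity only through the subharmonicity of $\log|F|$. So, in the spirit of Beckenbach, I would first note that Theorem~\ref{thm-C} holds for every nonnegative function $w$ on $\overline{\mathbb D}$ such that $\log w$ is subharmonic and $w$ has $L^1$ boundary values. I would also use the standard facts that this class is closed under sums, and that the left side may be computed on the dilates $f_r(z)=f(rz)$, then passing to the limit $r\to1^-$ by Fatou's lemma and $\|f_r\|_{h^p}\le\|f\|_{h^p}$.

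The key step is to find, for a harmonic $f=h+\overline g\in h^p$, a majorant $w\ge|f|^p$ on $\mathbb D$ with $\log w$ subharmonic and
\[
\int_{\mathbb T} w\,|dz|\le c_p\int_{\mathbb T}|f|^p\,|dz|,
\qquad
c_p=\begin{cases}\sec^p\!\left(\frac{\pi}{2p}\right),&1<p\le2,\\ 2,&p\ge2.\end{cases}
\]
These values are exactly the factors by which the harmonic constants of Theorem~\ref{thm-D} and of Melentijevi\'c--Bo\v{z}in exceed the analytic constant $1/2$. Given such a $w$, applying the log-subharmonic version of Theorem~\ref{thm-C} to it gives
\[
\int_{D_0\cup D_1}|f|^p\,|dz|\le\int_{D_0\cup D_1} w\,|dz|
\le \frac{1}{\sin\frac\theta2+\cos\frac\theta2}\int_{\mathbb T}w\,|dz|
\le A_p(\theta)\int_{\mathbb T}|f|^p\,|dz|,
\]
which is the assertion.

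To construct $w$, I would take $w=|F|^p+|G|^p$ with $F,G$ analytic built from $h$ and $g$, so that $\log w$ is subharmonic. The natural candidates are $F=h+g$ and $F=h-g$ after a rotation, which reduces to the real-part case: for real $u=\operatorname{Re}F$ one has $|u|\le|F|$. The boundary comparison $\int_{\mathbb T}|F|^p\le \sec^p(\pi/2p)\int_{\mathbb T}|\operatorname{Re}F|^p$ (with $\operatorname{Im}F(0)=0$) is the sharp M.~Riesz inequality of Pichorides type. That is precisely the analytic input behind Theorem~\ref{thm-D}, and I would import it in the form used by Kayumov--Ponnusamy--Kaliraj. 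For $p\ge2$ I would instead follow Melentijevi\'c--Bo\v{z}in, who obtain the constant for harmonic $f$ through a plurisubharmonic minorant inequality of the form $|a+\bar b|^p\le \Phi(a,b)$, with $\Phi$ log-plurisubharmonic and integrable against the boundary with the stated constant.

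The main obstacle is this majorization step for genuinely complex-valued $f$, as opposed to real-valued $f$. For a complex-valued $f$, $|f|$ is not $|\operatorname{Re}F|$ for a single analytic $F$. So one must either average over rotations $e^{i\varphi}f$, using $\frac1{2\pi}\int_0^{2\pi}|\operatorname{Re}(e^{i\varphi}f)|^p\,d\varphi=\kappa_p|f|^p$ together with the real case and Fubini, or produce the Melentijevi\'c--Bo\v{z}in-type majorant directly. I would try the rotation-averaging route first. The analytic functions $F_\varphi$ with $\operatorname{Re}F_\varphi=\operatorname{Re}(e^{i\varphi}f)$ are explicit, namely $F_\varphi=e^{i\varphi}h+e^{-i\varphi}g$ up to an imaginary constant. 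Frazer's inequality then applies to each $|F_\varphi|^p$, and integrating in $\varphi$ should transfer the constant without loss.
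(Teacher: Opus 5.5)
The paper does not prove Theorem~\ref{thm-E}; it is quoted from Das--Kaliraj. Your proposal therefore has to stand on its own, and it does so only for $1<p\le 2$.

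\textbf{The case $1<p\le2$ works.} Your rotation-averaging route is correct here and gives exactly the stated constant. Put $\tilde F_\varphi=e^{i\varphi}h+e^{-i\varphi}g-i\,\operatorname{Im}\bigl(e^{i\varphi}h(0)+e^{-i\varphi}g(0)\bigr)$. Since $p>1$, $\tilde F_\varphi\in H^p$, and $\operatorname{Re}\tilde F_\varphi=\operatorname{Re}(e^{i\varphi}f)$. The steps are:
\begin{enumerate}
\item Apply Theorem~\ref{thm-C} to $\tilde F_\varphi$.
\item Apply the Ess\'en--Verbitsky bound $\|F\|_p\le\sec(\frac{\pi}{2p})\|\operatorname{Re}F\|_p$ for $F(0)$ real. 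This, not Pichorides' bound for the conjugate function, is the inequality you need.
\item Average in $\varphi$ using $\frac1{2\pi}\int_0^{2\pi}|\operatorname{Re}(e^{i\varphi}f)|^p\,d\varphi=\kappa_p|f|^p$ with $\kappa_p>0$.
\end{enumerate}
This yields $\sec^p(\frac{\pi}{2p})/(\sin\frac\theta2+\cos\frac\theta2)$. No log-subharmonic majorant or Beckenbach extension is needed for it. Your first candidate $|F|^p+|G|^p$ would lose a factor $2^{1-p/2}$.

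\textbf{The gap is $p>2$.} Rotation averaging transfers the constant of $\|F\|_p\le C_p\|\operatorname{Re}F\|_p$, and for $p\ge2$ the sharp value is $C_p=\csc(\frac{\pi}{2p})$. Since $\csc^p(\frac{\pi}{2p})$ increases strictly on $[2,\infty)$ starting from the value $2$ (it equals $8$ at $p=3$), you do not obtain $2$.

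Averaging over $\varphi$ cannot rescue this. For real $f=2\operatorname{Re}h$ with $h(0)$ real, $F_\varphi=2\cos\varphi\,h$. The boundary step is then literally $\int_{\mathbb T}|h|^p\le c\int_{\mathbb T}|\operatorname{Re}h|^p$, which forces $c\ge\csc^p(\frac{\pi}{2p})>2$.

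Your alternative, a Melentijevi\'c--Bo\v{z}in-type log-plurisubharmonic $\Phi$ with boundary constant $2$, is only asserted. Nothing in the proposal shows that a log-subharmonic $w\ge|f|^p$ with $\int_{\mathbb T}w\le2\int_{\mathbb T}|f|^p$ exists for $p>2$. It is also not true that $2$ is ``exactly'' a sharp Riesz--Fej\'er factor for $p\ge 2$. It is simply the value $\sec^2(\frac{\pi}{4})=2$ at $p=2$, carried over to larger $p$.

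\textbf{What closes the case is interpolation.} Consider the linear map $T\phi=P[\phi]|_{D_0\cup D_1}$, where $P$ is the Poisson integral.
\begin{itemize}
\item $\|T\|_{L^\infty\to L^\infty}\le1$ by the maximum principle.
\item $\|T\|_{L^2\to L^2}^2\le 2/(\sin\frac\theta2+\cos\frac\theta2)$. This is your case $p=2$. Directly: normalize $g(0)=0$, use $|f|^2\le 2(|h|^2+|g|^2)$, apply Theorem~\ref{thm-C} to $h$ and $g$, and use $\int_{\mathbb T}(|h|^2+|g|^2)\,|dz|=\int_{\mathbb T}|f|^2\,|dz|$.
\end{itemize}
Riesz--Thorin with $\frac1p=\frac{1-t}{2}$ then gives $\|T\|_{L^p\to L^p}^p\le 2/(\sin\frac\theta2+\cos\frac\theta2)$ for every $p\ge2$. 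This is precisely $A_p(\theta)$.
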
 
	Significant progress on higher-dimensional Riesz--Fej\'er inequalities has recently been made by Chen and Hamada \cite{Chen-Hamada-MZ-2023}, who established Fejér--Riesz type inequalities for pluriharmonic functions in $\mathbb{R}^{2n}$.
	\vspace{1mm}
	
	More recently, Das \cite{Das-2025} considered Gabriel's problem in the harmonic Hardy space and proved the following theorem.
	\begin{Thm}\cite{Das-2025}\label{thm-F} 
		Let $f\in h^p$ for some $p>1$, and let $C$ be any convex curve in $\mathbb{D}$. Then \[ \int_C|f(z)|^p\,|dz| \le 4 \int_{\mathbb{T}}|f(z)|^p\,|dz|, \qquad p\ge2, \] and \[ \int_C|f(z)|^p\,|dz| \le 2\sec^p\!\left(\frac{\pi}{2p}\right) \int_{\mathbb{T}}|f(z)|^p\,|dz|, \qquad 1<p<2. \] 
	\end{Thm}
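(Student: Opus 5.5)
The plan follows the pattern of Gabriel's proof of Theorem~\ref{thm-A}, with the analytic Riesz--Fej\'er step replaced by its harmonic analogues. There are three steps.

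\textbf{Step 1: reduce to boundary values.} Let $f\in h^p$ with $p>1$. Then $f$ is the Poisson integral of its boundary function $f^*\in L^p(\mathbb{T})$. Take the dilations $f_r(z)=f(rz)$. It suffices to prove the inequality for $f_r$ and the curve $C$, and then let $r\to1^-$. Indeed, $C\subset\mathbb{D}$ is compact, so $f_r\to f$ uniformly on $C$. Also $\int_{\mathbb{T}}|f_r|^p\,|dz|=2\pi M_p^p(r,f)\le \|f\|_{h^p}^p\cdot 2\pi$, which is controlled by the boundary integral. So we may assume $f$ is harmonic on a neighbourhood of $\overline{\mathbb{D}}$.

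\textbf{Step 2: pass to a subharmonic majorant.} For $p\ge2$, the function $|f|^p$ is subharmonic. For $1<p<2$, we cannot use $|f|^p$ directly, and instead use the Riesz-type subharmonic minorant/majorant that underlies Theorem~\ref{thm-D}. Concretely, write $f=h+\overline{g}$ and set $F=h+g$ (analytic). Use the pointwise estimate $|f|^p\le c_p\,\Phi$, where $\Phi$ is subharmonic and comparable to the boundary data, namely
\[
\int_{\mathbb{T}}\Phi\,|dz|\le \int_{\mathbb{T}}|f|^p\,|dz|.
\]
Here $c_p=\sec^p(\pi/2p)$ when $1<p<2$, and $c_p=2$ when $p\ge2$. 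For $p\ge2$ the second case is obtained by splitting $f$ into real and imaginary parts, giving $|f|^p\le 2^{p/2-1}(|u|^p+|v|^p)$, applying the real-valued Melentijevi\'c--Bo\v{z}in estimate to each part, and accepting the loss of a factor $2$.

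\textbf{Step 3: apply Gabriel's argument to the majorant.} Gabriel's proof of Theorem~\ref{thm-A} in fact shows the following: for every nonnegative subharmonic $\Phi$ continuous on $\overline{\mathbb{D}}$ with $\log\Phi$ subharmonic (Beckenbach's extension), or more generally for the least harmonic majorant, one has
\[
\int_C\Phi\,|dz|\le 2\int_{\mathbb{T}}\Phi\,|dz|.
\]
The key geometric fact is that the harmonic measure density seen from points of a convex curve integrates along $C$ to at most $2$. More precisely, for the Poisson kernel $P(z,\zeta)$ one has $\int_C P(z,\zeta)\,|dz|\le 2$ uniformly in $\zeta\in\mathbb{T}$. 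This holds because the level sets of $P(\cdot,\zeta)$ are horocycles, and a convex curve crosses each horocycle at most twice, so its length inside the horodisk is at most twice the horocycle's length (by the monotonicity of perimeter of convex sets). Using $|f|^p\le c_p\,\mathcal{P}[\,|f^*|^p]$ type bounds together with Fubini then gives
\[
\int_C|f|^p\,|dz|\le 2c_p\int_{\mathbb{T}}|f|^p\,|dz|,
\]
which is the claimed constants $4$ and $2\sec^p(\pi/2p)$.

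\textbf{Main obstacle.} The hard step is Step 2 for $1<p<2$. We need a pointwise majorant, with constant exactly $\sec^p(\pi/2p)$, by a function whose integral over $C$ can be controlled via the Poisson kernel. I would try the plurisubharmonic-minorant technique of Kayumov--Ponnusamy--Kaliraj: find the sharp inequality
\[
|w|^p\le \sec^p(\pi/2p)\,\Re\big(\text{analytic power}\big)+\text{harmonic term},
\]
which makes $\sec^p(\pi/2p)\,\mathcal P[|f^*|^p]-|f|^p$ superharmonic, and then insert this into the Poisson-kernel bound of Step 3.
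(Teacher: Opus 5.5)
The paper gives no proof of this statement; it quotes it from Das as background. So your argument has to stand on its own, and it does not, because Step 3 is false.

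\textbf{The gap in Step 3.} The bound $\sup_{\zeta\in\mathbb T}\int_C P(z,\zeta)\,|dz|\le 2$ fails. Take $P(z,\zeta)=\frac{1}{2\pi}\frac{1-|z|^2}{|\zeta-z|^2}$ and the segment $C_\epsilon=[-1+\epsilon,1-\epsilon]$. Then $\int_{C_\epsilon}P(z,1)\,|dz|=\frac{1}{2\pi}\int_{-1+\epsilon}^{1-\epsilon}\frac{1+x}{1-x}\,dx\sim\frac1\pi\log\frac1\epsilon$; thin closed convex curves around $C_\epsilon$ give twice this.

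Your horocycle reasoning cannot produce a bounded constant either. By the layer-cake formula, $\int_C P\,|dz|=\int_0^\infty\ell\bigl(C\cap\{P>s\}\bigr)\,ds$. The superlevel sets are horodisks of radius $(1+2\pi s)^{-1}$, and bounding the length by a multiple of their perimeter leaves $\int_0^\infty\frac{ds}{1+2\pi s}=\infty$.

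Equivalently, Gabriel's inequality with a constant independent of $C$ fails for positive harmonic functions. The function $u_r(z)=\Re\frac{1+rz}{1-rz}$ has $\int_{\mathbb T}u_r\,|dz|=2\pi$ for every $r$, while $\int_{-1}^1u_r(x)\,dx=\frac2r\log\frac{1+r}{1-r}-2\to\infty$ as $r\to1^-$.

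There is also a consistency check. For complex harmonic $f$, $|f|=\max_\theta\Re(e^{-i\theta}f)$ is subharmonic, so $|f|^p$ is subharmonic for every $p\ge1$. Your remark that $|f|^p$ cannot be used for $1<p<2$ is therefore a misdiagnosis. If Step 3 held for subharmonic functions or least harmonic majorants, it would give $\int_C|f|^p\,|dz|\le2\int_{\mathbb T}|f|^p\,|dz|$ for all $p\ge1$. That includes $p=1$, where the harmonic Gabriel inequality fails, with $u_r$ as a counterexample. Any route of the form ``$|f|^p\le c_p\,\mathcal P[|f^*|^p]$ plus Fubini'' only yields constants that blow up as $C$ approaches $\mathbb T$, such as $2/(1-R)$ with $R=\sup_{z\in C}|z|$.

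\textbf{The missing idea.} You need a majorant with genuinely analytic structure: log-subharmonicity transfers Gabriel's theorem, subharmonicity does not. Suppose $\Phi>0$ is continuous on $\overline{\mathbb D}$ with $\log\Phi$ subharmonic. Let $U$ be the Poisson integral of $\log\Phi|_{\mathbb T}$ and $F=e^{U+i\tilde U}$. Then $\Phi\le|F|$ in $\mathbb D$ and $|F|=\Phi$ on $\mathbb T$, so Theorem~\ref{thm-A} with $p=1$ gives $\int_C\Phi\,|dz|\le2\int_{\mathbb T}\Phi\,|dz|$. This is the correct content of the Beckenbach-type extension you mention in passing, and it is not implied by a Poisson-kernel estimate.

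After dilating, normalize $f=h+\bar g$ so that $g(0)=0$. Then $\Phi=(|h|+|g|)^p$ (plus a small constant if needed) qualifies, since $\log(|h|+|g|)=\max_\theta\log|h+e^{i\theta}g|$ and $|f|\le|h|+|g|$. For $1<p<2$, combine three facts: Kalaj's boundary inequality (the one used in the proof of Lemma~\ref{Lemma-2.1}), $|h|+|g|\le\sqrt2\,(|h|^2+|g|^2)^{1/2}$, and $1-|\cos(\pi/p)|=2\cos^2(\pi/(2p))$. Together they give $\int_{\mathbb T}\Phi\,|dz|\le\sec^p(\pi/(2p))\int_{\mathbb T}|f|^p\,|dz|$, hence exactly the constant $2\sec^p(\pi/(2p))$.

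Your Step 2 never identifies such a $\Phi$. Your ``main obstacle'' plan goes back through $\mathcal P[|f^*|^p]$, i.e.\ through the false step.

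\textbf{The case $p\ge2$.} Here your bookkeeping is also wrong. Splitting into real and imaginary parts costs $2^{p/2-1}$, which exceeds $2$ for $p>4$. The Melentijevi\'c--Bo\v{z}in theorem is an inequality along a diameter, not a pointwise majorant, so it cannot be inserted into a Gabriel-type argument as stated. Nor does $(|h|+|g|)^p$ give $4$ there: for $h=g=z$ and $p=4$, $\int_{\mathbb T}(|h|+|g|)^4\,|dz|=\frac83\int_{\mathbb T}|f|^4\,|dz|$. That case needs a different comparison.

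Step 1 is fine.
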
 
	Theorem \ref{thm-F} completes the extension of the classical Gabriel's theorem from analytic Hardy spaces to harmonic Hardy spaces. 
	
	\subsection{Motivation and Main Results in Weighted Bergman Spaces} \label{subsec-1.3}
	The results discussed in the previous subsections are established in the setting of Hardy spaces, where the underlying norm is determined by boundary values on the unit circle. Consequently, the corresponding proofs rely heavily on boundary integral representations, the Poisson kernel, and the subharmonicity of functions. In contrast, Bergman spaces are defined by interior weighted area integrals rather than boundary values. It is therefore natural to ask whether analogous Gabriel-type inequalities continue to hold in the Bergman space setting.
	
	Let $dA(z)$ denote the normalized area measure on $\mathbb{D}$, given by
	\[
	dA(z)=\frac{dx\,dy}{\pi}
	=\frac{1}{\pi}r\,dr\,d\theta,
	\qquad
	z=x+iy=re^{i\theta}.
	\]
	For $-1<\alpha<\infty$, the weighted area measure is defined by
	\[
	dA_\alpha(z)
	=
	(\alpha+1)(1-|z|^2)^\alpha\,dA(z).
	\]
	
	For $0<p<\infty$, the weighted analytic Bergman space $A_\alpha^p$ consists of all analytic functions $f$ on $\mathbb D$ satisfying
	\[
	\|f\|_{A_\alpha^p}
	=
	\left(
	\int_{\mathbb D}|f(z)|^p\,dA_\alpha(z)
	\right)^{1/p}
	<\infty.
	\]
	When $\alpha=0$, this reduces to the classical Bergman space $A^p$. Likewise, the weighted harmonic Bergman space $a_\alpha^p$ consists of all complex-valued harmonic functions $f$ on $\mathbb D$ for which
	\[
	\|f\|_{a_\alpha^p}
	=
	\left(
	\int_{\mathbb D}|f(z)|^p\,dA_\alpha(z)
	\right)^{1/p}
	<\infty.
	\]
	For $\alpha=0$, we simply write $a_\alpha^p=a^p$. Moreover, it is well known that $H^p\subset A^p$ and $h^p\subset a^p$.
	For a comprehensive account of Bergman spaces, we refer the reader to the monographs of Duren and Schuster \cite{Duren-Schuster-AMS-2004}, Zhu \cite{Zhu-OTFS-2007}, and Hedenmalm, Korenblum, and Zhu \cite{Hedenmalm-Korenblum-Zhu-GTM}.
	
	The transition from Hardy spaces to Bergman spaces has attracted considerable attention over the past two decades. A significant contribution in this direction was made by Zhu \cite{Zhu-Monthly-2004}, who established a systematic correspondence between Hardy and weighted Bergman spaces. As an application of this correspondence, he obtained the following weighted Bergman analogue of the classical Riesz--Fej\'er inequality.
	
	\begin{Thm}\cite[Theorem 4]{Zhu-Monthly-2004}
		\label{thm-G}
		Let $0<p<\infty$ and $\alpha>-1$. If
		$f\in A_\alpha^p$, then
		\[
		\int_{-1}^{1}
		(1-|x|)^{\alpha+1}|f(x)|^p\,dx
		\le
		\pi
		\int_{\mathbb D}|f(z)|^p\,dA_\alpha(z).
		\]
	\end{Thm}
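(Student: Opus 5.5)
The plan follows Zhu's correspondence between Hardy and weighted Bergman spaces. We lift $f\in A^p_\alpha$ to an auxiliary analytic function of two variables, apply the classical Riesz--Fej\'er inequality slice by slice, and integrate over the extra variable. Concretely, for $f$ analytic in $\mathbb D$ and $w$ in the disk, consider $F_w(z)=f(wz)$ for $|w|<1$. The Riesz--Fej\'er inequality applied to $z\mapsto f(rz)$ along the diameter $[-1,1]$ gives, for each $0<r<1$,
\[
\int_{-1}^1 |f(rx)|^p\,dx \le \frac12\int_0^{2\pi}|f(re^{i\theta})|^p\,d\theta .
\]
Substituting $t=rx$, the left side becomes $r^{-1}\int_{-r}^{r}|f(t)|^p\,dt$. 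We therefore have $\int_{-r}^r|f(t)|^p\,dt\le \frac r2\int_0^{2\pi}|f(re^{i\theta})|^p\,d\theta$ for every $r$.

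Next, multiply this by the weight $2(1-r^2)^{\alpha}$ and integrate in $r$ over $(0,1)$. On the right-hand side this yields
\[
\int_0^1\!\!\int_0^{2\pi}|f(re^{i\theta})|^p(1-r^2)^\alpha r\,d\theta\,dr=\frac{\pi}{\alpha+1}\int_{\mathbb D}|f|^p\,dA_\alpha .
\]
On the left-hand side we use Fubini: for fixed $t\in(-1,1)$, the point $t$ is counted for all $r\in(|t|,1)$, so we get $\int_{-1}^1|f(t)|^p\,\Phi(|t|)\,dt$ with
\[
\Phi(s)=2\int_s^1(1-r^2)^\alpha\,dr .
\]

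The remaining step, which is the main obstacle, is the comparison of $\Phi(s)$ with the target weight $(1-s)^{\alpha+1}$ with the right constant; the bookkeeping of the factor $(\alpha+1)$ in $dA_\alpha$ is what must land on $\pi$. Since $1-r^2=(1-r)(1+r)$ and $1+r\in[1,2]$, we have $(1-r^2)^\alpha\ge (1-r)^\alpha\min(1,2^\alpha)$.

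For $\alpha\ge0$ this gives $\Phi(s)\ge 2(1-s)^{\alpha+1}/(\alpha+1)$. Combining with the previous steps,
\[
\frac{2}{\alpha+1}\int_{-1}^1(1-|t|)^{\alpha+1}|f(t)|^p\,dt\le \frac{\pi}{\alpha+1}\int_{\mathbb D}|f|^p\,dA_\alpha ,
\]
which is even better than claimed, by a factor $2$.

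For $-1<\alpha<0$ the lower bound must be handled more carefully. We have $1+r\le 2$, hence $(1+r)^\alpha\ge 2^\alpha$, and so $\Phi(s)\ge 2^{\alpha+1}(1-s)^{\alpha+1}/(\alpha+1)\ge (1-s)^{\alpha+1}/(\alpha+1)$ because $2^{\alpha+1}\ge1$. This gives exactly the constant $\pi$.

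If one wants to avoid the Hardy-space step for $f$ merely in $A^p_\alpha$, it is harmless: $f(r\cdot)$ is analytic on a neighborhood of $\overline{\mathbb D}$, so Riesz--Fej\'er applies to it directly. Monotone convergence then justifies all the interchanges.
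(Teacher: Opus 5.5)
The paper gives no proof of this statement; it is quoted from Zhu as motivation, so there is no in-paper proof to compare against. Your argument is correct. It is the same dilation--integration--Fubini scheme the paper itself uses in the proof of Theorem~\ref{Theorem-2.4}. There, the Hardy-space inequality is applied to dilates, multiplied by $r(1-r^2)^\alpha$, and the substitution $\rho=rt$ produces the kernel $\kappa_\alpha(\rho)=\int_\rho^1(1-u^2)^\alpha\,du$. That kernel is exactly your $\Phi(\rho)/2$; your preliminary substitution $t=rx$ just absorbs the factor $r$ earlier.

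Your bookkeeping is right and actually yields better constants: $\pi/2$ for $\alpha\ge0$ and $\pi/2^{\alpha+1}<\pi$ for $-1<\alpha<0$. Your treatment of $-1<\alpha<0$ via $(1+r)^\alpha\ge 2^\alpha$ covers a range that the paper's lower bound for $\kappa_\alpha$ does not, since that bound uses $(1-u^2)^\alpha\ge(1-u)^\alpha$, which needs $\alpha\ge0$.

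You can avoid the case split altogether. Since $0\le r\le 1$,
\[
\Phi(s)\ge 2\int_s^1 r(1-r^2)^\alpha\,dr=\frac{(1-s^2)^{\alpha+1}}{\alpha+1}\ge\frac{(1-s)^{\alpha+1}}{\alpha+1}
\]
for every $\alpha>-1$. This gives the constant $\pi$ even with the larger weight $(1-x^2)^{\alpha+1}$.

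Two cosmetic points:
\begin{itemize}
\item Only Tonelli's theorem is needed for the interchange of integrals; no limit is taken, so monotone convergence plays no role.
\item The two-variable lift $f(wz)$ announced at the start is never used beyond real $w=r$, so that framing can be dropped.
\end{itemize}
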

	
	Motivated by Zhu's theorem, the present authors \cite{Halder-Kumar-P1-2026} recently established the corresponding Riesz--Fej\'er inequality for weighted harmonic Bergman spaces.
	
	\begin{Thm}\cite[Theorem 1.2]{Halder-Kumar-P1-2026}
		\label{thm-I}
		Let $1<p<\infty$ and $-1<\alpha<\infty$. If
		$f=h+\overline g\in a_\alpha^p$, then
		\[
		\int_{-1}^{1}
		(1-|x|)^{\alpha+1}|f(x)|^p\,dx
		\le
		\pi
		\sec^p\!\left(\frac{\pi}{2p}\right)
		\int_{\mathbb D}|f(z)|^p\,dA_\alpha(z).
		\]
	\end{Thm}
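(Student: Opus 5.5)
The plan is to reduce the weighted Bergman inequality to the harmonic Hardy-space Riesz--Fej\'er inequality applied to the dilations $f_r(z)=f(rz)$, and then integrate over $r$ against the Bergman weight. This follows the spirit of Zhu's Hardy--Bergman correspondence behind Theorem \ref{thm-G}.

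\emph{Step 1 (dilations).} Write $c_p=\sec^p(\pi/(2p))$ for $1<p\le 2$, and $c_p=1$ for $p\ge 2$; note that $c_p\le \sec^p(\pi/(2p))$ for all $p>1$. Fix $0<r<1$. The function $f_r$ is harmonic on a neighborhood of $\overline{\mathbb D}$, so $f_r\in h^p$ and its boundary function is $\zeta\mapsto f(r\zeta)$. Theorem \ref{thm-D} (for $1<p\le2$) and the Melentijevi\'c--Bo\v{z}in extension (for $p\ge2$) give
\[
\int_{-1}^{1}|f(rx)|^p\,dx\le \frac{c_p}{2}\int_0^{2\pi}|f(re^{i\theta})|^p\,d\theta .
\]
Substituting $t=rx$, this becomes
\[
\int_{-r}^{r}|f(t)|^p\,dt\le \frac{c_p\, r}{2}\int_0^{2\pi}|f(re^{i\theta})|^p\,d\theta .
\]

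\emph{Step 2 (integrate against the weight).} Multiply by $(1-r^2)^\alpha$ and integrate over $r\in(0,1)$. By polar coordinates the right side equals
\[
\frac{c_p}{2}\int_0^1 r(1-r^2)^\alpha\int_0^{2\pi}|f(re^{i\theta})|^p\,d\theta\,dr=\frac{c_p\,\pi}{2(\alpha+1)}\int_{\mathbb D}|f|^p\,dA_\alpha .
\]
All integrands are nonnegative, so Tonelli's theorem lets us swap the order on the left, which becomes
\[
\int_{-1}^1|f(t)|^p\,W(|t|)\,dt,\qquad W(s)=\int_s^1(1-r^2)^\alpha\,dr .
\]

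\emph{Step 3 (weight comparison, the only delicate point).} We need $W(s)\ge (1-s)^{\alpha+1}/(2(\alpha+1))$ for $0\le s<1$ and all $\alpha>-1$. Write $(1-r^2)^\alpha=(1-r)^\alpha(1+r)^\alpha$ and split into two cases.
\begin{itemize}
\item If $\alpha\ge0$, then $(1+r)^\alpha\ge1$, so $W(s)\ge (1-s)^{\alpha+1}/(\alpha+1)$.
\item If $-1<\alpha<0$, then $(1+r)^\alpha\ge 2^\alpha>1/2$, so $W(s)\ge 2^\alpha(1-s)^{\alpha+1}/(\alpha+1)\ge (1-s)^{\alpha+1}/(2(\alpha+1))$.
\end{itemize}
Inserting this lower bound into Step 2 and cancelling $1/(2(\alpha+1))$ gives
\[
\int_{-1}^1(1-|x|)^{\alpha+1}|f(x)|^p\,dx\le c_p\,\pi\int_{\mathbb D}|f|^p\,dA_\alpha\le \pi\sec^p\!\Big(\frac{\pi}{2p}\Big)\int_{\mathbb D}|f|^p\,dA_\alpha ,
\]
which is the assertion.

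I expect the main obstacle to be choosing the weight so that the swapped kernel $W$ dominates $(1-|x|)^{\alpha+1}$ with exactly the factor that produces the constant $\pi$. This is why the weight $(1-r^2)^\alpha$ (without the factor $r$) is multiplied in, leaving the factor $r$ from the dilation to match the polar area element. Everything else is a direct application of the Hardy-space inequality to dilates together with Tonelli's theorem.
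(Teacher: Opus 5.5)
Your argument follows the same dilation--integration scheme the paper uses for its own Bergman-space results. The paper only quotes this statement from the authors' earlier work, but compare its proof of Theorem~\ref{Theorem-2.4}: apply the Hardy-space inequality to $f_r$, integrate against the radial weight, and swap the order of integration. Your kernel $W(s)=\int_s^1(1-r^2)^\alpha\,dr$ is exactly the paper's $\kappa_\alpha$. Your bound $W(s)\ge (1-s)^{\alpha+1}/(2(\alpha+1))$ is correct, including the case $-1<\alpha<0$ via $(1+r)^\alpha\ge 2^\alpha>\tfrac12$. The polar-coordinate bookkeeping and the final constant are also correct.

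One input is misquoted, however. For $p\ge 2$ you set $c_p=1$, which asserts $\int_{-1}^1|f(x)|^p\,dx\le\frac12\int_{\mathbb T}|f|^p\,|dz|$ for harmonic $f$; this is false.

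\emph{Counterexample at $p=2$.} Take $f=\Re F$ with $F=\sum_{n\ge1}a_nz^n$ and $a_n\in\mathbb R$. Then $f=F$ on $(-1,1)$, while $\int_{\mathbb T}|f|^2\,|dz|=\frac12\int_{\mathbb T}|F|^2\,|dz|$. Your claim would therefore give the analytic Riesz--Fej\'er inequality with constant $\frac14$. But the ratio $\frac12$ is approached by $F=(1-z^2)^{-s}-1$ as $s\to\frac12^-$.

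\emph{Fix.} The Melentijevi\'c--Bo\v{z}in theorem actually gives the same constant $\frac12\sec^p(\frac{\pi}{2p})$ as Theorem~\ref{thm-D}, now for all $p>1$. So take $c_p=\sec^p(\frac{\pi}{2p})$ throughout. Your last step only uses $c_p\le\sec^p(\frac{\pi}{2p})$, so everything else goes through verbatim and yields exactly the stated inequality.
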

	
	The above results demonstrate that the classical Riesz--Fej\'er inequality admits natural extensions from Hardy spaces to weighted Bergman spaces. This naturally leads to the following question.
	
	\begin{Qsn}
		Can Gabriel's problem be developed in the settings of weighted analytic and harmonic Bergman spaces? More precisely, do the classical results of Gabriel and Frazer, namely Theorems~\ref{thm-A}, \ref{thm-B}, \ref{thm-C}, \ref{thm-E}, and \ref{thm-F}, admit analogues in these spaces?
	\end{Qsn}
	
	The primary objective of the present paper is to answer this question affirmatively. More precisely, we develop a Bergman-space theory of Gabriel's and Frazer's problem for both weighted analytic and harmonic function spaces.
	\vspace{1mm}
	
	Our first main result establishes a Gabriel-type inequality for weighted analytic Bergman spaces, which serves as the Bergman-space analogue of the classical Gabriel's theorem, namely Theorem \ref{thm-A}.
	\begin{thm}\label{Theorem-2.1}
		Let $f \in A_\alpha^p$, $p > 0$. If $C$ is any convex curve in $\mathbb{D}$, then$$\int_C |f(z)|^p (1-|z|^2)^{1+\alpha} |dz| \leq D_{C,\alpha} \int_{\mathbb{D}} |f(z)|^p (1-|z|^2)^\alpha dA(z).$$
		Here, $D_{C,\alpha} = \frac{4\pi}{(1-R^2)^{1+\alpha}}$, $-1 < \alpha < \infty$ and $R=\sup_{z \in C} |z|$.
	\end{thm}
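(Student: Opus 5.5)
Since $C\subset\{|z|\le R\}$ with $R<1$, we have $(1-|z|^2)^{1+\alpha}\le 1$ on $C$ when $\alpha>-1$. So it suffices to prove
\[
\int_C |f(z)|^p\,|dz| \le \frac{4\pi}{(1-R^2)^{1+\alpha}}\int_{\mathbb D}|f(z)|^p(1-|z|^2)^\alpha\,dA(z).
\]
The idea is to apply Gabriel's Theorem~\ref{thm-A} on dilated disks and then average over the dilation radius to recover an area integral.

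Fix $r\in(R,1)$ and set $f_r(z)=f(rz)$, which is analytic on a neighbourhood of $\overline{\mathbb D}$ and hence lies in $H^p$. The curve $C/r=\{z/r: z\in C\}$ is convex and contained in $\mathbb D$, because $R/r<1$. Theorem~\ref{thm-A} applied to $f_r$ and $C/r$ gives
\[
\int_{C/r}|f(rw)|^p\,|dw|\le 2\int_0^{2\pi}|f(re^{i\theta})|^p\,d\theta .
\]
The change of variables $z=rw$ turns this into
\[
\int_C|f(z)|^p\,|dz|\le 2r\int_0^{2\pi}|f(re^{i\theta})|^p\,d\theta
\]
for every $r\in(R,1)$. (If $C$ touches $|z|=R$ one can shrink $C$ slightly and use monotone convergence, but the dilation by $r>R$ already handles this.)

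Next multiply both sides by $(1-r^2)^\alpha$ and integrate over $r\in(R,1)$. The right side becomes
\[
2\int_R^1\!\!\int_0^{2\pi}|f(re^{i\theta})|^p(1-r^2)^\alpha\,r\,d\theta\,dr\le 2\pi\int_{\mathbb D}|f|^p(1-|z|^2)^\alpha\,dA(z),
\]
using $dA=r\,dr\,d\theta/\pi$. The left side becomes
\[
\int_C|f|^p\,|dz|\cdot\int_R^1(1-r^2)^\alpha\,dr .
\]

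It remains to bound $I:=\int_R^1(1-r^2)^\alpha\,dr$ from below by a quantity comparable to $(1-R^2)^{1+\alpha}$; this is the step needing care, since $\alpha$ may be negative. Substitute $1-r^2=s$, so $dr=ds/(2r)\ge ds/2$. Then
\[
I\ge \tfrac12\int_0^{1-R^2}s^\alpha\,ds=\frac{(1-R^2)^{1+\alpha}}{2(1+\alpha)}.
\]
This gives a constant $4\pi(1+\alpha)/(1-R^2)^{1+\alpha}$, which has an extra factor $1+\alpha$. For $-1<\alpha\le 0$ that factor is at most $1$, so this bound suffices. For $\alpha>0$ I would instead use the cruder pointwise bound, since $(1-r^2)^\alpha\ge(1-r^2)^{1+\alpha}$ does not help.

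The cleaner route for all $\alpha$ is to keep the weight $(1-|z|^2)^{1+\alpha}$ on the left rather than discarding it. For $z\in C$ and $r\in(R,1)$, write $(1-r^2)^\alpha=(1-r^2)^{1+\alpha}/(1-r^2)$. Since $|z|\le R<r$, we have $1-r^2\le 1-|z|^2$. Together with the estimate above, tracking these factors should yield exactly $4\pi/(1-R^2)^{1+\alpha}$.

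The main obstacle is this radial-integration bookkeeping needed to match the stated constant for all $\alpha>-1$. I would first try the split into $\alpha\le0$ and $\alpha>0$, using the substitution $s=1-r^2$ and $r\le1$; the convexity and geometry are handled entirely by Theorem~\ref{thm-A}.
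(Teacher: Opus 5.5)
Up to the constant $4\pi(1+\alpha)/(1-R^2)^{1+\alpha}$, your argument is the paper's. You apply Theorem~\ref{thm-A} to $f_r$ on the dilated convex curve $C/r$, $r\in(R,1)$, integrate against the radial weight over $(R,1)$, and use $(1-|z|^2)^{1+\alpha}\le 1$ on $C$. The paper instead drops the factor $r$ and integrates against $(1-r^2)^\alpha r$, which gives the same number.

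The genuine gap is your final step for $\alpha>0$, where you assert that keeping the weight on $C$ and tracking factors \emph{should} yield exactly $4\pi/(1-R^2)^{1+\alpha}$. The observation $1-r^2\le 1-|z|^2$ gives a \emph{lower} bound for $(1-|z|^2)^{1+\alpha}$ on $C$. Bounding the left-hand side from above needs an upper bound, and the only one available is $1$, essentially attained when $C$ passes near $0$.

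More decisively, no bookkeeping can work. For $f\equiv 1$ and $C=\{|z|=R\}$ one has $\int_{\mathbb D}(1-|z|^2)^\alpha\,dA=1/(1+\alpha)$. The displayed inequality is then equivalent to $R(1+\alpha)(1-R^2)^{2(1+\alpha)}\le 2$. With $R=3/(1+\alpha)$ the left side tends to $3$ as $\alpha\to\infty$. For instance, with $\alpha=99$ and $R=0.03$, the two sides of the original inequality are about $0.172$ and $0.1375$. So the statement, read with the unnormalized integral on the right, is false for large $\alpha$.

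What your method (and the paper's) actually proves for every $\alpha>-1$ is
\[
\int_C|f(z)|^p(1-|z|^2)^{1+\alpha}\,|dz|\le\frac{4\pi}{(1-R^2)^{1+\alpha}}\,\|f\|^p_{A^p_\alpha}
=\frac{4\pi(1+\alpha)}{(1-R^2)^{1+\alpha}}\int_{\mathbb D}|f(z)|^p(1-|z|^2)^\alpha\,dA(z).
\]
This is exactly how the paper's proof ends. It concludes with $D_{C,\alpha}\|f\|^p_{A^p_\alpha}$, and the normalized norm $\|f\|^p_{A^p_\alpha}=\int_{\mathbb D}|f|^p\,dA_\alpha$ carries the factor $1+\alpha$ that the displayed statement omits.

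Your treatment of $-1<\alpha\le 0$ is therefore correct as stated. For $\alpha>0$, do not try to sharpen the radial computation. Instead, state the conclusion with $\|f\|^p_{A^p_\alpha}$ on the right, or equivalently with the constant $4\pi(1+\alpha)/(1-R^2)^{1+\alpha}$.
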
 
	The preceding theorem immediately yields the following Gabriel-type inequality for the standard Bergman space $A^p$.
	\begin{cor}
		Let $f \in A^p$, $p > 0$. If $C$ is any convex curve in $\mathbb{D}$, then$$\int_C |f(z)|^p (1-|z|^2) |dz| \leq D_{C} \int_{\mathbb{D}} |f(z)|^p (1-|z|^2) dA(z).$$
		Here, $D_{C} = \frac{4\pi}{(1-R^2)}$ and $R=\sup_{z \in C} |z|$.
	\end{cor}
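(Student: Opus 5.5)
The plan is a local, area-averaging argument based on subharmonicity. The stated inequality has the weight $(1-|z|^2)$ on both sides, so it is not simply Theorem~\ref{Theorem-2.1} at a fixed $\alpha$. Setting $\alpha=0$ there puts weight $1$ on $\mathbb D$. Setting $\alpha=1$ and using $(1-|z|^2)^2\ge (1-R^2)(1-|z|^2)$ on $C$ only gives the constant $4\pi/(1-R^2)^3$.

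Global arguments via integral means also lose a power. Applying Gabriel's theorem (Theorem~\ref{thm-A}) to $f(rz)$ for $R<r<1$ gives $\int_C|f|^p|dz|\le 4\pi r M_p^p(r,f)$. Integrating against $r(1-r^2)\,dr$ over $(R,1)$ then yields only $8\pi/(1-R^2)^2$. The test function $f(z)=(1-\bar a z)^{-N}$ with $|a|=R$ and $C$ a circle of radius $R$ shows that the ratio of the two sides grows like $(1-R)^{-1}$. So the proof must exploit localization near $C$, and I would not pass through $M_p(r,f)$.

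\textbf{Step 1 (sub-mean value).} Fix $0<\delta<1$ and let $\Delta_z=D(z,\delta(1-|z|^2))\subset\mathbb D$. Since $|f|^p$ is subharmonic for analytic $f$ and every $p>0$,
\[
|f(z)|^p\le \frac{1}{\delta^2(1-|z|^2)^2}\int_{\Delta_z}|f(w)|^p\,dA(w).
\]

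\textbf{Step 2 (Fubini).} Multiply by $(1-|z|^2)$, integrate over $C$, and interchange the order of integration. This gives
\[
\int_C|f|^p(1-|z|^2)|dz|\le \frac{1}{\delta^2}\int_{\mathbb D}|f(w)|^p\left(\int_{C\cap\{z:\,w\in\Delta_z\}}\frac{|dz|}{1-|z|^2}\right)dA(w).
\]
For $w\in\Delta_z$ one has $1-|w|^2$ comparable to $1-|z|^2$ with constants depending only on $\delta$. Hence the set $\{z: w\in\Delta_z\}$ lies in a disk centred at $w$ of radius about $\delta(1-|w|^2)/(1-2\delta)$.

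\textbf{Step 3 (length of $C$ in a disk).} A convex curve meeting a disk of radius $\rho$ has length at most $2\pi\rho$ inside it, because the intersection of a convex set with a disk is convex and lies in the disk. Therefore the inner integral is bounded by an absolute multiple of $\delta$. It is nonzero only when $\operatorname{dist}(w,C)\lesssim \delta(1-|w|)$, which forces $1-|w|^2\ge c_\delta(1-R^2)$.

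\textbf{Step 4 (inserting the weight).} Since $1-|w|^2\ge c_\delta(1-R^2)$ on the support of the inner integral, insert $1\le (1-|w|^2)/\bigl(c_\delta(1-R^2)\bigr)$. This produces
\[
\int_C|f|^p(1-|z|^2)|dz|\le \frac{K_\delta}{1-R^2}\int_{\mathbb D}|f|^p(1-|w|^2)\,dA(w),
\]
which has the right order in $R$.

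\textbf{Main obstacle.} The hardest step is reaching exactly the constant $4\pi$. I would try to optimize $\delta$ after computing $K_\delta$ explicitly from the factor $\pi\delta^2$ and the length bound. If that falls short, I would replace the Euclidean disks $\Delta_z$ by pseudo-hyperbolic disks and use the Möbius invariance of $(1-|z|^2)^{-2}dA$ to track constants exactly. Failing that, I would settle for an absolute constant times $1/(1-R^2)$ and flag the precise value as the delicate point.
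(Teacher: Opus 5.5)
You are right that the printed inequality, with $(1-|z|^2)\,dA$ on the right, is not the case $\alpha=0$ of Theorem~\ref{Theorem-2.1}. The paper's proof is exactly that specialization, which only gives
\[
\int_C|f|^p(1-|z|^2)\,|dz|\le \frac{4\pi}{1-R^2}\int_{\mathbb D}|f|^p\,dA .
\]
This is weaker than the printed statement because $1-|z|^2\le 1$. The weight on the printed right-hand side is most likely a misprint, and the intended version needs nothing beyond Theorem~\ref{Theorem-2.1}.

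\textbf{The gap.} For the statement as printed, your proposal never reaches $D_C=4\pi/(1-R^2)$, and optimizing $\delta$ cannot get there.
\begin{itemize}
\item Step~1 needs $\Delta_z\subset\mathbb D$ for $|z|\le R$, i.e.\ $\delta<1/(1+R)$, not merely $\delta<1$.
\item Steps~1--3 cost at least a factor $2\pi/\delta$. The normalized area of $\Delta_z$ is $\delta^2(1-|z|^2)^2$. The set $E_w=\{z:w\in\Delta_z\}$ contains $w$ and points at distance nearly $\delta(1-|w|^2)$ from $w$. Hence any length bound $2\pi\rho_w$, divided by $\inf_{E_w}(1-|z|^2)$, is at least $2\pi\delta$.
\item Step~4 divides by $c_\delta\le 1-2R\delta-\delta^2(1-R^2)$, because the support of the inner integral contains points with $|w|$ close to $R+\delta(1-R^2)$.
\end{itemize}
Altogether your constant is at least
\[
\frac{2\pi}{\delta\bigl(1-2R\delta-\delta^2(1-R^2)\bigr)}\ \ge\ \frac{2\pi}{\max_{0<\delta<1}(\delta-\delta^3)}=3\sqrt3\,\pi>4\pi ,
\]
and it is at least $16\pi$ as $R\to1$. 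So, carried out, your argument proves the inequality with $K/(1-R^2)$ for some unspecified absolute constant $K$. That is the right order, but not the stated constant. The pseudo-hyperbolic variant is only named, not executed.

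\textbf{Integral means are not the obstruction.} Your test function shows only that $(1-R^2)^{-1}$ is the correct order; it does not rule out arguments through $M_p$. Write $1-|z|^2=\int_{|z|}^1 2s\,ds$ and let $K$ be the convex region bounded by $C$. Apply Theorem~\ref{thm-A} to $f(s\,\cdot)$ on the convex curve $s^{-1}\partial\bigl(K\cap\overline{D(0,s)}\bigr)$; this is allowed after one more dilation, since $f(s\,\cdot)$ is continuous on $\overline{\mathbb D}$. It gives $\int_{C\cap D(0,s)}|f|^p\,|dz|\le 4\pi s\,M_p^p(s,f)$ for every $s$, hence
\[
\int_C|f|^p(1-|z|^2)\,|dz|\le 8\pi\int_0^R s^2M_p^p(s,f)\,ds+4\pi R(1-R^2)M_p^p(R,f).
\]
Using only that $M_p$ is nondecreasing, this is at most $\frac{8\pi R}{1-R^2}\int_{\mathbb D}|f|^p(1-|z|^2)\,dA$. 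That already proves the printed corollary when $R\le 1/2$ and has the right order for every $R$.

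Removing the leftover factor $2R$ when $R>1/2$ needs more information about $M_p$ than monotonicity. So the constant $4\pi$ in that range is proved neither by your sketch nor by the paper's one-line specialization.
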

	We now investigate Gabriel's problem in the setting of weighted harmonic Bergman spaces. As an application of Theorem \ref{Theorem-2.1}, we derive the weighted harmonic Bergman analogue of Theorem \ref{thm-F}.
	\begin{thm}\label{Theorem-2.2}
		Let $f \in a_\alpha^p$, $0 < p < \infty$, and let $C$ be any convex curve in $\mathbb{D}$. Then$$\int_C |f(z)|^p(1-|z|^2)^{1+\alpha}|dz| \leq K_{p,\alpha,C} \int_{\mathbb{D}}|f(z)|^p(1-|z|^2)^\alpha dA(z),$$where $K_{p,\alpha,C}$ is a positive constant depending only on $p$, $\alpha$, and $C$.
	\end{thm}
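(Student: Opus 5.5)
Since $C$ is a convex curve contained in $\mathbb D$, it is compact, so $R=\sup_{z\in C}|z|<1$. The plan is to avoid boundary-value arguments entirely and instead use a local sub-mean-value (quasi-subharmonicity) estimate for $|f|^p$ on the compact disk $\{|z|\le R\}$. This estimate is valid for complex-valued harmonic $f$ for every $0<p<\infty$. It is the point where the Bergman setting differs from the harmonic Hardy setting of Theorem~\ref{thm-F}: we never need to compare with boundary values, only with interior area integrals, so the failure of subharmonicity of $|f|^p$ for $p\le 1$ costs only a constant.

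\textbf{Step 1 (quasi-mean-value inequality).} I would prove that there is $c_p$, depending only on $p$, such that for every harmonic $f$ on a disk $\Delta(z_0,\delta)\subset\mathbb D$,
\[
|f(z_0)|^p\le \frac{c_p}{\delta^2}\int_{\Delta(z_0,\delta)}|f(w)|^p\,dA(w).
\]
For $p\ge1$ this holds with $c_p=1$: $|f|$ is subharmonic because it is the modulus of a vector-valued harmonic map, and $t\mapsto t^p$ is convex and increasing, so $|f|^p$ is subharmonic. For $0<p<1$, I would follow the Hardy--Littlewood/Fefferman--Stein argument. First apply the case $p=1$ on subdisks to get
\[
\sup_{\Delta(z_0,s)}|f|\le \frac{c}{(t-s)^2}\int_{\Delta(z_0,t)}|f|\,dA \qquad (s<t\le\delta).
\]
Then split $|f|=|f|^{1-p}|f|^p\le \big(\sup_{\Delta(z_0,t)}|f|\big)^{1-p}|f|^p$ and apply Young's inequality, which gives
\[
M(s)\le \tfrac12 M(t)+\frac{c'}{(t-s)^{2/p}}\int_{\Delta(z_0,\delta)}|f|^p\,dA, \qquad M(r):=\sup_{\Delta(z_0,r)}|f|.
\]
The standard iteration lemma then absorbs the $\tfrac12M(t)$ term; I would apply it on slightly shrunk disks, or first to dilates $f(\rho\,\cdot)$, so that $M$ is finite. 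This step is the main technical obstacle, since it is exactly where subharmonicity is unavailable. It is classical, though, and I would either cite it or include the short iteration.

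\textbf{Step 2 (uniform pointwise bound on $C$).} I would set $\delta=(1-R)/2$ and $R'=(1+R)/2<1$. For $z\in C$ we have $\Delta(z,\delta)\subset\{|w|<R'\}$, and there $(1-|w|^2)^\alpha\ge m_\alpha:=\min\{1,(1-R'^2)^{\alpha}\}$. The first entry of the minimum covers $\alpha<0$, since then $(1-|w|^2)^\alpha\ge1$; the second covers $\alpha\ge0$. Step~1 then gives
\[
|f(z)|^p\le \frac{c_p}{\delta^2 m_\alpha}\int_{\mathbb D}|f(w)|^p(1-|w|^2)^\alpha\,dA(w).
\]

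\textbf{Step 3 (integrate along $C$).} On $C$ we have $(1-|z|^2)^{1+\alpha}\le1$, since $1+\alpha>0$. A convex curve lying in the closed disk $\{|z|\le R\}$ has length $L(C)\le 2\pi R\le 2\pi$, by monotonicity of perimeter for nested convex sets; this is the only place convexity enters. Integrating the bound of Step~2 over $C$ yields the theorem with
\[
K_{p,\alpha,C}=\frac{8\pi c_p}{(1-R)^2\,m_\alpha}.
\]
Alternatively, for $p\ge1$ one could reduce to Theorem~\ref{Theorem-2.1} by writing $f=h+\overline g$ and bounding $\|h\|_{A^p_\alpha}$ and $\|g\|_{A^p_\alpha}$ by $\|f\|_{a^p_\alpha}$ via boundedness of the weighted Bergman projection (for $p>1$). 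That route fails for $p\le1$, which is why I would use the direct local argument above: it covers the whole range $0<p<\infty$ uniformly.
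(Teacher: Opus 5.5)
Your proof is correct, and it takes a genuinely different route from the paper. The paper never uses an interior pointwise bound. It sets $F=h+g$, so that $u=\Re F$, and quotes a harmonic-conjugate estimate on weighted Bergman spaces (Pel\'aez--R\"atty\"a) to get $\|F\|_{A^p_\alpha}\le c_{p,\alpha}\|u\|_{a^p_\alpha}\le c_{p,\alpha}\|f\|_{a^p_\alpha}$. It then applies the analytic Theorem~\ref{Theorem-2.1} to $F$; that theorem is Gabriel's Hardy-space inequality applied to the dilates $F(r\,\cdot)$ on the rescaled curves $r^{-1}C$ and averaged over $r\in(R,1)$. The same is done with $if$ to control $v$, and the two are recombined via $(u^2+v^2)^{p/2}\le\max\{1,2^{p/2-1}\}(|u|^p+|v|^p)$.

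Your argument needs only the quasi-sub-mean-value property and $L(C)\le 2\pi R$. It is self-contained and uses neither Gabriel's theorem nor conjugate-function theory. It also shows that convexity is inessential: the same proof bounds $\int|f|^p\,d\mu$ for any finite measure $\mu$ carried by a compact subset of $\mathbb D$. It further sidesteps a normalization the paper passes over. $F$ is determined only up to an imaginary constant (via $h\mapsto h+c$, $g\mapsto g-\overline{c}$), so $\|F\|_{A^p_\alpha}\lesssim\|\Re F\|_{a^p_\alpha}$ can hold only after imposing $\Im F(0)=0$. That step is harmless but necessary.

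In return, the paper's constant inherits the factor $D_{C,\alpha}=4\pi(1-R^2)^{-(1+\alpha)}$. It therefore grows like $(1-R)^{-(1+\alpha)}$ as $R\to1$, whereas yours, as written, grows like $(1-R)^{-2}m_\alpha^{-1}$. Much of that gap comes from your uniform choice of $\delta$. Take $\delta(z)=(1-|z|)/2$ and keep the factor $(1-|z|^2)^{1+\alpha}$ on $C$ instead of bounding it by $1$. Then $|f(z)|^p(1-|z|^2)^{1+\alpha}\lesssim(1-|z|)^{-1}\int_{\mathbb D}|f|^p(1-|w|^2)^\alpha\,dA(w)$, hence $K_{p,\alpha,C}\lesssim L(C)(1-R)^{-1}$. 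This matches the paper's order for $\alpha=0$ and beats it for $\alpha>0$.

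Two small corrections. First, in the Young's-inequality step the last term should be $c'(t-s)^{-2/p}\bigl(\int_{\Delta(z_0,\delta)}|f|^p\,dA\bigr)^{1/p}$. As written it is not homogeneous in $f$, although the conclusion you state for Step~1 is the right one. Second, your closing remark that the reduction to Theorem~\ref{Theorem-2.1} fails for $p\le1$ is not accurate. Only the Bergman-projection proof of the conjugate estimate breaks down there. Unlike the Hardy-space situation, $\|F\|_{A^p_\alpha}\lesssim\|\Re F\|_{a^p_\alpha}$ (with $\Im F(0)=0$) holds for every $0<p<\infty$. It follows, for instance, from your Step~1 estimate together with the Littlewood--Paley description of $A^p_\alpha$, and this is exactly how the paper covers the full range.
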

	\begin{rem}
		A noteworthy feature of our result is the contrast with the Hardy space setting. While Gabriel's theorem, namely Theorem \ref{thm-A}, is valid for analytic Hardy spaces for all $0<p<\infty$, its harmonic counterpart fails whenever $0<p\le1$ (see \cite{Das-2025}). Theorem \ref{Theorem-2.2} demonstrates that this phenomenon does not occur in weighted harmonic Bergman spaces, where Gabriel's result remains valid throughout the full range $0<p<\infty$.
	\end{rem}
	
	Next, following the line of investigation initiated by Frazer \cite{Frazer-1945}, we consider the highly structured case in which the convex curve is the circle $	C_r=\{z\in\mathbb D:|z|=r\}$. Since the weight is constant on $C_r$, this enables us to derive a more refined bound, thereby obtaining a weighted harmonic Bergman analogue of Theorem \ref{thm-B}.
	\begin{thm}\label{Theorem-2.3}
		Let $0<p<\infty$, $\alpha>-1$, and let $f\in a_\alpha^p$. Then, for every $0<r<1$,
		\[
		\int_{C_r}|f(z)|^p(1-|z|^2)^{1+\alpha} |dz|
		\leq
		2\, \pi \,r(\alpha+1)\,
		\int_{\mathbb D}|f(z)|^p(1-|z|^2)^\alpha\,dA(z),
		\]
		where $	C_r=\{z\in\mathbb D:|z|=r\}$.
	\end{thm}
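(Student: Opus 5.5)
The plan is to reduce the inequality to a comparison of integral means, using that both the circle $C_r$ and the weight are rotation invariant. On $C_r$ we have $|dz|=r\,d\theta$ and $(1-|z|^2)^{1+\alpha}=(1-r^2)^{1+\alpha}$, so the left-hand side equals
\[
2\pi r\,(1-r^2)^{1+\alpha}M_p^p(r,f).
\]
Writing $dA=\frac1\pi\rho\,d\rho\,d\theta$ in polar coordinates, the right-hand side becomes
\[
2\pi r(\alpha+1)\int_0^1 2\rho(1-\rho^2)^\alpha M_p^p(\rho,f)\,d\rho .
\]
After cancelling $2\pi r$, the theorem is equivalent to the radial inequality
\begin{equation*}
(1-r^2)^{1+\alpha}M_p^p(r,f)\le (\alpha+1)\int_0^1 2\rho(1-\rho^2)^\alpha M_p^p(\rho,f)\,d\rho .
\end{equation*}

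Suppose $\rho\mapsto M_p(\rho,f)$ is nondecreasing on $[0,1)$. Then we discard the part of the integral over $[0,r]$ and bound $M_p^p(\rho,f)\ge M_p^p(r,f)$ for $\rho\ge r$. This gives
\[
(\alpha+1)\int_0^1 2\rho(1-\rho^2)^\alpha M_p^p(\rho,f)\,d\rho\ \ge\ M_p^p(r,f)\,(\alpha+1)\int_r^1 2\rho(1-\rho^2)^\alpha\,d\rho=(1-r^2)^{1+\alpha}M_p^p(r,f),
\]
which is exactly the required bound, with equality constant. For $p\ge1$ monotonicity holds because $|f|^p$ is subharmonic for a complex-valued harmonic $f$: $|f|$ is subharmonic, being the supremum of the harmonic functions $\mathrm{Re}(e^{-i\varphi}f)$, and $t\mapsto t^p$ is convex and increasing. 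Integral means of a subharmonic function are nondecreasing in the radius. The same argument covers analytic $f$ for every $p>0$, since $|h|^p$ is subharmonic.

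The main obstacle is the range $0<p<1$ for genuinely harmonic $f=h+\overline g$. There $|f|^p$ need not be subharmonic, and $M_p(\rho,f)$ need not be monotone. This is the same obstruction that makes Gabriel's theorem fail in $h^p$, $p\le1$. My first attempt would be a quasi-monotonicity estimate of the form $M_p(r,f)\le c_p\,M_p(\rho,f)$ for $\rho\ge r$. For instance, subharmonicity of $|f|^p$ in the weak sense gives, for harmonic $f$, the sub-mean-value inequality $|f(z)|^p\le c_p\,\frac{1}{|\Delta|}\int_\Delta|f|^p$ over small disks $\Delta$. Alternatively, one can compare through the analytic parts $h,g$, whose $p$-means are monotone. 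Inserting such an estimate into the argument above yields the result with constant $2\pi r(\alpha+1)c_p$.

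To get the sharp constant $2\pi r(\alpha+1)$ stated for all $0<p<\infty$, I would need genuine monotonicity of $M_p(\cdot,f)$ for $p<1$. I expect this is not available in general. So I would first verify the argument carefully for $p\ge1$ (and for analytic $f$). I would then check whether the stated constant for $p<1$ really needs an additional factor, or whether some averaging over $\rho$ can absorb the failure of monotonicity.
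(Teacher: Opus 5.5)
For $p\ge 1$, and for analytic $f$ with any $p>0$, your argument is the paper's proof. Both sides are rewritten through $M_p^p(\rho,f)$. Monotonicity of $\rho\mapsto M_p^p(\rho,f)$ lets you discard $[0,r]$, and $\int_r^1(1-\rho^2)^\alpha\rho\,d\rho=\frac{(1-r^2)^{\alpha+1}}{2(\alpha+1)}$ finishes it. Your justification of monotonicity, via subharmonicity of $|f|^p$ and $|f|=\sup_\varphi \Re(e^{-i\varphi}f)$, is correct; the paper only asserts it.

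The gap you flag for $0<p<1$ cannot be closed, because there the inequality as stated is false. The paper's proof breaks exactly at its unqualified claim that $M_p^p(r,f)$ increases in $r$. Take $f(z)=1+\tfrac12\Re z$, i.e. $h=1+z/4$, $g=z/4$. Since $f>0$ and $t\mapsto t^p$ is strictly concave, Jensen and the mean value property give $M_p^p(\rho,f)<f(0)^p=1$ for every $\rho\in(0,1)$. Divide both sides by $2\pi r$. The right-hand side becomes the fixed number $c=2(\alpha+1)\int_0^1M_p^p(\rho,f)(1-\rho^2)^\alpha\rho\,d\rho<1$, an average against a probability measure. The left-hand side becomes $(1-r^2)^{1+\alpha}M_p^p(r,f)$, which tends to $1$ as $r\to0^+$, so the inequality fails for all small $r$.

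Your fallback also fails: no quasi-monotonicity $M_p(r,f)\le c_pM_p(\rho,f)$ for $\rho\ge r$ can hold when $p<1$. The Poisson kernel $P(z)=\Re\frac{1+z}{1-z}$ has $M_p(0,P)=1$, while $M_p(\rho,P)\to0$ as $\rho\to1^-$, since $P_\rho^p$ is bounded in $L^{1/p}$ and tends to $0$ off $\theta=0$.

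What survives is a weaker statement with a non-explicit constant. You get it by comparing $M_p^p(r,f)$ with the whole weighted area integral rather than with a single $M_p^p(\rho,f)$. Use $M_p^p(r,f)\le M_p^p(r,h)+M_p^p(r,g)$, then the analytic case for $h$ and $g$. Then use the Bergman estimate $\|h\|_{A^p_\alpha}^p+\|g\|_{A^p_\alpha}^p\lesssim\|f\|_{a^p_\alpha}^p$ with $g(0)=0$, which, unlike its Hardy-space analogue, holds for every $p>0$. This gives the bound $K_{p,\alpha}\,r\,\|f\|_{a^p_\alpha}^p$. So the theorem with constant $2\pi r(\alpha+1)$ must be restricted to $p\ge1$, or to analytic $f$.
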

	
	The sharp inequalities of Kalaj \cite{Kalaj-2019} for harmonic Hardy and Bergman spaces establish a fundamental quantitative relationship between the natural Euclidean quantity $(|g|^{2}+|h|^{2})^{1/2}$ and the harmonic mapping $f=g+\overline{h}$. These inequalities extend the classical Riesz conjugate theory for analytic functions to complex-valued harmonic mappings and have important applications to the boundedness of the Hilbert transform, Riesz-type inequalities, and isoperimetric estimates. They also provide a powerful tool for relating the analytic and co-analytic parts of a harmonic mapping while preserving the sharp constants.
	\vspace{1mm}
	
	The presence of the weight $(1-|z|^2)^\alpha$ in the weighted harmonic Bergman spaces makes the extension of Kalaj's inequalities highly nontrivial. It is therefore natural to ask whether analogous sharp inequalities of \cite[Theorem 2.1 and Corollary 2.9]{Kalaj-2019} remain valid in the weighted setting.
	
	In this paper, we answer this question affirmatively by proving weighted harmonic Bergman analogues of these inequalities. Since the classical Bergman space corresponds to the case $\alpha=0$, our results recover the known Bergman inequalities as a special case while providing a unified framework for weighted harmonic Bergman spaces. 
	
	Beyond their independent interest, these inequalities serve as a key ingredient in proving a weighted harmonic Bergman analogue of Frazer's theorem. Thus, our following result not only generalizes Kalaj's inequalities to the weighted setting but also demonstrate their effectiveness by solving a classical extremal problem in weighted harmonic Bergman spaces.
	\begin{lem}\label{Lemma-2.1}
		Let $1 < p < \infty$. Assume that $f = h + \bar{g}$ belongs to the weighted harmonic Bergman space $a_\alpha^p$, where $h$ and $g$ are analytic functions on $\mathbb{D}$. If $\Re(g(0)h(0)) \ge 0$, then$$\left( \int_{\mathbb{D}} (|h|^2 + |g|^2)^{p/2} dA_\alpha \right)^{1/p} \le \frac{1}{(1-|\cos(\pi/p)|)^{1/2}} \left( \int_{\mathbb{D}} |h+\bar{g}|^p dA_\alpha \right)^{1/p}$$
	\end{lem}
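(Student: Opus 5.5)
We follow Kalaj's approach for the harmonic Hardy and Bergman cases and reduce the weighted statement to a \emph{pointwise} subharmonicity argument. The weight $(1-|z|^2)^\alpha$ is radial, so the integral over $\mathbb D$ splits into integral means:
\[
\int_{\mathbb D}\Phi\, dA_\alpha=2(\alpha+1)\int_0^1 (1-r^2)^\alpha r\, \frac{1}{2\pi}\int_0^{2\pi}\Phi(re^{i\theta})\,d\theta\,dr .
\]
It is therefore enough to prove, for every $0<r<1$, the integral-mean inequality
\begin{equation}\label{plan-circle}
\int_0^{2\pi}\bigl(|h|^2+|g|^2\bigr)^{p/2}(re^{i\theta})\,d\theta\le \frac{1}{(1-|\cos(\pi/p)|)^{p/2}}\int_0^{2\pi}|h+\bar g|^p(re^{i\theta})\,d\theta .
\end{equation}
Multiplying \eqref{plan-circle} by $(1-r^2)^\alpha r$ and integrating in $r$ then gives the lemma, after taking $p$-th roots.

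To prove \eqref{plan-circle}, we apply the Hardy-space argument to the dilates $h_r(z)=h(rz)$ and $g_r(z)=g(rz)$. These are analytic on a neighbourhood of $\overline{\mathbb D}$, so no boundary-value issues arise, and $h_r(0)=h(0)$, $g_r(0)=g(0)$, so the hypothesis $\Re(g(0)h(0))\ge 0$ is preserved. The key tool is a plurisubharmonic minorant of Kalaj type. Set $c_p=(1-|\cos(\pi/p)|)^{-p/2}$. We seek a function $\Psi(w_1,w_2)$ on $\mathbb C^2$ such that:
\begin{itemize}
\item $c_p|w_1+\bar w_2|^p-(|w_1|^2+|w_2|^2)^{p/2}\ge \Psi(w_1,w_2)$ for all $(w_1,w_2)\in\mathbb C^2$;
\item $\Psi$ is plurisubharmonic, so that $\theta\mapsto\Psi(h_r,g_r)(e^{i\theta})$ has mean at least $\Psi(h(0),g(0))$ by subharmonicity of $\Psi\circ(h_r,g_r)$;
\item $\Psi(w_1,w_2)\ge 0$ whenever $\Re(w_1w_2)\ge 0$.
\end{itemize}
Following Kalaj, the natural candidate is built from the Verbitsky/Pichorides-type function, written in terms of $|w_1+\bar w_2|$ and the angle of $w_1w_2$. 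Using $|w_1+\bar w_2|^2=|w_1|^2+|w_2|^2+2\Re(w_1w_2)$, it takes the form
\[
\Psi=\kappa_p\,\Re\bigl((w_1w_2)^{p/2}\bigr)\quad\text{or}\quad \Psi=\kappa_p\,|w_1w_2|^{p/2}\cos\bigl(\tfrac p2\arg(w_1w_2)\bigr),
\]
with the branch cut chosen in the half-plane opposite to $\Re(w_1w_2)\ge0$. Since $\Re\bigl((w_1w_2)^{p/2}\bigr)$ is pluriharmonic away from the cut, the required mean-value inequality holds.

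Once $\Psi$ is in hand, the rest is routine:
\begin{enumerate}
\item integrate the pointwise inequality $c_p|h_r+\overline{g_r}|^p-(|h_r|^2+|g_r|^2)^{p/2}\ge\Psi(h_r,g_r)$ over $\mathbb T$;
\item use the mean-value inequality and $\Psi(h(0),g(0))\ge0$ to obtain \eqref{plan-circle};
\item integrate \eqref{plan-circle} against $2(\alpha+1)(1-r^2)^\alpha r\,dr$.
\end{enumerate}
Finiteness is not an issue: for $p>1$, $f\in a^p_\alpha$ forces $h,g\in A^p_\alpha$ by boundedness of the Bergman projection, though this is not even needed since the inequality holds with possibly infinite sides. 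Alternatively, one can avoid reproving the circle inequality and simply cite \cite[Theorem 2.1]{Kalaj-2019} applied to $h_r,g_r\in H^p$.

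\textbf{Main obstacle.} The hard step is verifying the pointwise inequality with the sharp constant $c_p$ together with the sign condition on $\Psi$, uniformly across the two regimes $1<p\le2$ and $p\ge2$ (where $|\cos(\pi/p)|$ changes behavior). The plan is to reduce to two real variables, $t=|w_1|^2+|w_2|^2$ and $s=2\Re(w_1w_2)$ with $|s|\le t$, and then, by homogeneity, to a one-variable inequality in $\phi=\arg(w_1w_2)$ for the extremal ratio $|w_1|=|w_2|$. This one-variable inequality is exactly the computation in Kalaj's paper, so I would invoke that result for the Hardy-space means and regard the weighted extension as the radial integration above.
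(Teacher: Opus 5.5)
Your proposal is correct and is essentially the paper's own proof: pass to the dilates $h_r,g_r$ (which keep $\Re(g(0)h(0))\ge 0$), apply \cite[Theorem 2.1]{Kalaj-2019} to get the integral-mean inequality on $|z|=r$, then integrate against $2(\alpha+1)(1-r^2)^\alpha r\,dr$. The citation has to carry that step, because your self-contained minorant sketch does not work as written: pluriharmonicity off the cut does not give the mean-value inequality, and for $p>2$ the candidate $\kappa_p\Re\bigl((w_1w_2)^{p/2}\bigr)$ fails, since $\cos(\tfrac p2\phi)$ changes sign on $|\phi|\le\pi/2$, and for $2<p<4$ it has a concave kink across the negative real axis because $\sin(p\pi/2)<0$.
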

	By making use of this lemma, we now establish weighted harmonic Bergman analogues of the results stated in Theorems \ref{thm-C} and \ref{thm-E} for the union of two intersecting diameters of circle.
	\begin{thm}\label{Theorem-2.4}
		If $f \in a_{\alpha}^p$ for some $p > 1$, $\alpha\geq0$ and $f(0)=0$, then the following inequality holds:$$\int_{D_0+D_1} |f(z)|^p (1-|z|^2)^{1+\alpha} |dz| \le A_p(\theta, \alpha) \|f\|^p_{a_{\alpha}^p},$$where $D_0, D_1$ are two diameters of $\mathbb{T}$, $\theta$ is the acute angle between them, and $$A_p(\theta, \alpha)= \frac{2^{1+\alpha}}{\sin\frac{\theta}{2} + \cos\frac{\theta}{2}} \frac{2^{p/2}}{(1-|\cos(\frac{\pi}{p})|)^{p/2}}.$$
	\end{thm}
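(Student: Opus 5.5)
The plan is to reduce the harmonic function $f$ to the log-subharmonic quantity $(|h|^2+|g|^2)^{p/2}$, integrate a dilated Frazer inequality (Theorem~\ref{thm-C}) over radii against the weight, and finish with Lemma~\ref{Lemma-2.1}.

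\textbf{Step 1 (normalization and pointwise bound).} Write $f=h+\overline{g}$ with $h,g$ analytic. Since $f(0)=0$, we may move constants between $h$ and $g$ so that $g(0)=0$. Then $h(0)=0$ as well, so $\Re(g(0)h(0))=0\ge 0$ and the hypothesis of Lemma~\ref{Lemma-2.1} holds. Pointwise we have
\[
|f|^p\le (|h|+|g|)^p\le 2^{p/2}(|h|^2+|g|^2)^{p/2}=:2^{p/2}u .
\]
The function $u$ has subharmonic logarithm, because $\log(|h|^2+|g|^2)$ is subharmonic.

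\textbf{Step 2 (Frazer on each subdisk).} For each $0<\rho<1$, apply Frazer's inequality in the form of Theorem~\ref{thm-C} to the dilation $u(\rho z)$. This gives
\[
\int_{(D_0+D_1)\cap\{|z|<\rho\}}u\,|dz|\le \frac{1}{\sin\frac\theta2+\cos\frac\theta2}\int_0^{2\pi}u(\rho e^{it})\,\rho\,dt .
\]
Frazer's theorem is stated for $|F|^p$ with $F$ analytic, so it has to be extended to log-subharmonic $u$. I expect this to be the main obstacle. I would first try to argue as Beckenbach did for the Riesz--Fej\'er inequality: Frazer's proof uses only the harmonic-majorant/subharmonicity structure of $\log|F|^p$. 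Failing that, I would approximate $u$ via $\log u=\sup$ of $\log|h\,a+g\,b|^p$-type expressions and apply Frazer to each analytic combination.

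\textbf{Step 3 (weight via Fubini).} For real $x\in(-1,1)$, use $(1-x^2)^{1+\alpha}\le 2^{1+\alpha}(1-|x|)^{1+\alpha}$ together with
\[
(1-|x|)^{1+\alpha}=(\alpha+1)\int_{|x|}^1(1-\rho)^\alpha\,d\rho .
\]
Applying Fubini to the integral over $D_0+D_1$ and then Step 2 yields
\[
\int_{D_0+D_1}u\,(1-|z|^2)^{1+\alpha}|dz|\le \frac{2^{1+\alpha}(\alpha+1)}{\sin\frac\theta2+\cos\frac\theta2}\int_0^1(1-\rho)^\alpha\int_0^{2\pi}u(\rho e^{it})\,\rho\,dt\,d\rho .
\]
Because $\alpha\ge0$, we have $(1-\rho)^\alpha\le(1-\rho^2)^\alpha$; this is exactly where the hypothesis $\alpha\ge 0$ enters. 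Consequently the right-hand side is bounded by a constant times $\int_{\mathbb D}u\,dA_\alpha$.

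\textbf{Step 4 (conclusion).} Combine Step 3 with Lemma~\ref{Lemma-2.1}:
\[
\int_{\mathbb D}u\,dA_\alpha\le (1-|\cos(\pi/p)|)^{-p/2}\,\|f\|_{a^p_\alpha}^p .
\]
Collecting the factors $2^{p/2}$, $2^{1+\alpha}$ and Frazer's constant gives $A_p(\theta,\alpha)$. Any normalization factors of $\pi$ and $(\alpha+1)$ coming from $dA$ versus $dA_\alpha$ must be tracked so that they cancel against the definition of $\|f\|_{a^p_\alpha}$.
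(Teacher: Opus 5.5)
Your route is essentially the paper's. The paper applies Frazer's inequality (citing Das--Kaliraj) to dilates of $(|h|+|g|)^p$ rather than your $u$. It then multiplies by $r(1-r^2)^\alpha$ and integrates in $r$, which produces the kernel $\kappa_\alpha(\rho)=\int_\rho^1(1-s^2)^\alpha\,ds$ on the diameters. Finally it uses $\alpha\ge0$ to get $\kappa_\alpha(\rho)\ge(1-\rho^2)^{1+\alpha}/\bigl((1+\alpha)2^{1+\alpha}\bigr)$. Your layer-cake formula for $(1-|x|)^{1+\alpha}$, followed by $(1-\rho)^\alpha\le(1-\rho^2)^\alpha$, is the same computation in the opposite order and gives the same constants.

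Your Step 2 goes through along your first suggestion, applied to the \emph{statement} of Theorem~\ref{thm-C} rather than its proof. For a dilate $u_\rho$, take $\Phi$ analytic with $\Re\Phi$ the Poisson integral of $\log u_\rho|_{\mathbb T}$. Then $u_\rho\le|e^{\Phi}|$ in $\mathbb D$, with equality a.e.\ on $\mathbb T$, and $e^{\Phi/p}\in H^\infty$ by Jensen, so Frazer's theorem applies to $e^{\Phi/p}$. Your fallback through $u=\sup_{|a|^2+|b|^2=1}|ah+bg|^p$ would not work: an inequality for each member of a family does not pass to the supremum inside the left-hand integral.

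What fails is Step 4: the $\pi$ does not cancel. Because
\[
\int_0^1\!\!\int_0^{2\pi}u(\rho e^{it})(1-\rho^2)^{\alpha}\rho\,dt\,d\rho=\frac{\pi}{\alpha+1}\int_{\mathbb D}u\,dA_\alpha ,
\]
the factor $\alpha+1$ from Step 3 cancels but $\pi$ survives. So your argument proves the inequality with constant $\pi A_p(\theta,\alpha)$. This is exactly the constant the paper's own proof reaches in its last display; the printed $A_p(\theta,\alpha)$ has lost the $\pi$.

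No bookkeeping can recover it, because the printed version is false. Take $p=2$, $\alpha=0$, $D_0=[-1,1]$, $D_1=[-i,i]$ (so $A_2(\pi/2,0)=2\sqrt2$), and $f=h+\overline{h}$ with $h(z)=(1-z)^{-\beta}-1$, $\beta\uparrow1$. Then $f(0)=0$ and $f=2h$ on $(-1,1)$, so the left side is at least $4\int_{-1}^1h(x)^2(1-x^2)\,dx=\frac{4}{1-\beta}+O(1)$. On the other hand, $\int_{\mathbb D}h^2\,dA=h(0)^2=0$ gives
\[
\|f\|_{a^2_0}^2=2\|h\|_{A^2}^2=2\left(\frac{\Gamma(2-2\beta)}{\Gamma(2-\beta)^2}-1\right)=\frac{1}{1-\beta}+O(1).
\]
The ratio tends to $4>2\sqrt2$; already at $\beta=0.9$ it exceeds $3.5$. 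This is consistent with $\pi A_2(\pi/2,0)=2\sqrt2\,\pi$.

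So what your plan proves --- and what the paper's proof proves --- is the theorem with $\pi A_p(\theta,\alpha)$ in place of $A_p(\theta,\alpha)$. You should state that constant rather than assert that the normalizations cancel.
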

	\begin{rem}
		Specializing to the case $\alpha=0$ in Theorems \ref{Theorem-2.2}, \ref{Theorem-2.3}, and \ref{Theorem-2.4} recovers the corresponding results for the standard harmonic Bergman space $a^p$.
	\end{rem}
	
	\section{\textbf{Application: Gabriel's and Frazer's problem for M\"obius invariant spaces}}
	M\"obius invariant spaces constitute one of the central classes of function spaces in complex analysis. Their defining feature is invariance under automorphisms of the unit disk, a property that plays a fundamental role in geometric function theory, operator theory, and conformal analysis. Several classical spaces, including the Bloch space, BMOA, Besov spaces, Dirichlet spaces, and the $Q_s$-spaces, arise naturally within this framework.
	
	To provide a unified treatment of these spaces, Zhu introduced the family of analytic M\"obius invariant spaces $Q(n,p,\alpha)$ in \cite{Zhu-IJM-2007}. Let $0<p<\infty$, $\alpha>-1$, and $n\in\mathbb N$. An analytic function $f$ belongs to $Q(n,p,\alpha)$ if
	\begin{equation}\label{Qanalytic}
		\|f\|_{Q(n,p,\alpha)}^{p}
		:=
		\sup_{a\in\mathbb D}
		\int_{\mathbb D}
		\left|(f\circ\sigma_a)^{(n)}(z)\right|^{p}
		(1-|z|^{2})^{\alpha}\,dA(z)
		<\infty,
	\end{equation}
	where
	\[
	\sigma_a(z)=\frac{a-z}{1-\overline{a}z},
	\qquad a\in\mathbb D,
	\]
	denotes the M\"obius automorphism of $\mathbb D$ interchanging $0$ and $a$.
	
	Since every automorphism of $\mathbb D$ is the composition of a rotation with $\sigma_a$, the seminorm in \eqref{Qanalytic} is M\"obius invariant. Consequently,
	\[
	f\in Q(n,p,\alpha)
	\quad \mbox{is equivalent to saying}\quad
	f\circ\sigma\in Q(n,p,\alpha)
	\]
	for every $\sigma\in{\rm Aut}(\mathbb D)$, and
	\[
	\|f\circ\sigma\|_{Q(n,p,\alpha)}
	=
	\|f\|_{Q(n,p,\alpha)}.
	\]
	
	The family $Q(n,p,\alpha)$ provides many well-known analytic function spaces. When $p\ge1$, the quantity $|f(0)|+\|f\|_{Q(n,p,\alpha)}$ defines a complete norm on $Q(n,p,\alpha)$, making it a Banach space. In particular, it coincides with the Bloch space whenever $np<\alpha+1$, reduces to the Besov space when $np=\alpha+2$, and contains the classical $Q_s$-spaces and BMOA as important special cases. Furthermore, these spaces admit elegant descriptions in terms of Carleson measures and lacunary series; see \cite{Zhu-IJM-2007} for a comprehensive account.
	
	In recent years, there has been considerable interest in extending M\"obius invariant function spaces to the harmonic setting. Motivated by applications to quasiconformal mappings, minimal surfaces, nonlinear elasticity, and geometric function theory, Sun {\it et al.} \cite{Sun-Liu-Wang-PA-2026} introduced the harmonic M\"obius invariant spaces $Q_h(n,p,\alpha)$. A harmonic mapping $f=h+\overline g$ belongs to $Q_h(n,p,\alpha)$ whenever
	\begin{equation}\label{Qharmonic}
		\|f\|_{Q_h(n,p,\alpha)}^{p}
		=
		\sup_{a\in\mathbb D}
		\int_{\mathbb D}
		\left(
		\left|(h\circ\sigma_a)^{(n)}(z)\right|
		+
		\left|(g\circ\sigma_a)^{(n)}(z)\right|
		\right)^p
		(1-|z|^2)^\alpha
		\,dA(z)
		<\infty.
	\end{equation}
	
	For the first-order case $n=1$, the above seminorm admits the equivalent representation
	\[
	\|f\|_{Q_h(1,p,\alpha)}^{p}
	\approx
	\sup_{a\in\mathbb D}
	\int_{\mathbb D}
	\Lambda_{f\circ\sigma_a}(z)^p
	(1-|z|^2)^\alpha\,dA(z),
	\]
	where
	\[
	\Lambda_f(z)=|f_z(z)|+|f_{\overline z}(z)|.
	\]
	For two nonnegative quantities $A$ and $B$, the notation $A\lesssim B$ means that there exists a positive constant $C$ such that $A\le CB$. We write $A\gtrsim B$ if $B\lesssim A$, and $A\approx B$ whenever both estimates hold.
	\vspace{1mm}
	
	Since
	\[
	\Lambda_f(z)
	\le |\nabla f(z)|
	\le \sqrt2\,\Lambda_f(z),
	\]
	the seminorm may equivalently be expressed in terms of the Euclidean gradient, where $|\nabla f(z)|:=(|f_x(z)|^2+|f_y(z)|^2)^{1/2}$.
	
	The spaces $Q_h(n,p,\alpha)$ retain the M\"obius invariance of their analytic counterparts while naturally incorporating harmonic mappings. They provide a common setting for harmonic Bloch spaces, harmonic $Q_s$-spaces, harmonic BMO-type spaces, and several classes of quasiregular harmonic mappings.
	
	Although the theory of harmonic M\"obius invariant spaces has witnessed rapid progress in recent years, many classical extremal problems remain unexplored in this setting. In particular, Gabriel's and Frazer's problems have not previously been investigated for either the analytic spaces $Q(n,p,\alpha)$ or their harmonic counterparts $Q_h(n,p,\alpha)$. 
	\vspace{1mm}
	
	Motivated by the classical Riesz conjugate theorem, Sun {\it et al.} recently established its harmonic analogue for the first-order M\"obius invariant space $Q_h(1,p,\alpha)$ in \cite[Theorem 3.1]{Sun-Liu-Wang-PA-2026} whenever $1+\alpha <p<2+\alpha$. 
	\vspace{1mm}
	
	We now recall the following weighted Riesz--Fej\'er inequalities obtained by the present authors in \cite{Halder-Kumar-P1-2026} for the aforementioned classes.
	\begin{Thm} \label{thm-2.2} \cite[Theorem 2.2]{Halder-Kumar-P1-2026}
		Let $ 1< p < \infty$ and $\alpha>np-1$.
		\begin{enumerate}[label=(\roman*)]
			\item If $f $ is a complex-valued harmonic function belonging to the space $Q_h(n,p,\alpha)$, then the following integral inequality holds:
			$$
			\int_{-1}^{1} (1 - |x|)^{\alpha+1-np} |f(x)|^p \, dx \leq  \pi \, \sec^{p}\left(\dfrac{\pi}{2p}\right)\, \|f\|^p_{a^p_{\alpha-np}}.
			$$
			\item If $f \in Q(n,p,\alpha)$, then the following integral inequality holds:
			$$
			\int_{-1}^{1} (1 - |x|)^{\alpha+1-np} |f(x)|^p \, dx \leq  \pi \,  \|f\|^p_{A^p_{\alpha-np}}.
			$$
		\end{enumerate}
	\end{Thm}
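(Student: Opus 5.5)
The plan is to reduce both parts to the weighted Riesz--Fej\'er inequalities already available, Theorem~\ref{thm-G} and Theorem~\ref{thm-I}, applied with the weight parameter $\beta:=\alpha-np$ in place of $\alpha$. The hypothesis $\alpha>np-1$ is exactly $\beta>-1$, so $\beta$ is an admissible Bergman weight. With this choice the exponent $\beta+1=\alpha+1-np$ on the left-hand sides of Theorems~\ref{thm-G} and \ref{thm-I} coincides with the exponent in the statement. The right-hand sides there are $\pi\int_{\mathbb D}|f|^p\,dA_\beta=\pi\|f\|^p_{A^p_\beta}$ (respectively with the factor $\sec^p(\pi/(2p))$ in the harmonic case), which is precisely what is claimed.

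The whole matter therefore reduces to an embedding. We must show that $f\in Q(n,p,\alpha)$ implies $f\in A^p_{\alpha-np}$, and that $f=h+\overline g\in Q_h(n,p,\alpha)$ implies $f\in a^p_{\alpha-np}$. Otherwise the right-hand side could be infinite and Theorems~\ref{thm-G} and \ref{thm-I} would not apply as stated.

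\textbf{Analytic case.} I would use Zhu's result from \cite{Zhu-IJM-2007}: when $np<\alpha+1$, the space $Q(n,p,\alpha)$ coincides with the Bloch space $\mathcal B$. I would also verify the inclusion $Q(n,p,\alpha)\subset\mathcal B$ directly, in the following steps.
\begin{enumerate}
\item By subharmonicity of $|(f\circ\sigma_a)^{(n)}|^p$ on the disk $|z|<1/2$, the seminorm bounds $|(f\circ\sigma_a)^{(n)}(0)|$ uniformly in $a$.
\item Expanding $(f\circ\sigma_a)^{(n)}(0)$ by the chain rule gives $(1-|a|^2)^n f^{(n)}(a)$ plus lower-order terms of the form $(1-|a|^2)^k f^{(k)}(a)$.
\item An induction, or the standard higher-derivative characterization of $\mathcal B$, then yields $\sup_a(1-|a|^2)^n|f^{(n)}(a)|<\infty$, so $f\in\mathcal B$.
\end{enumerate}
Bloch functions satisfy $|f(z)|\lesssim 1+\log\frac{1}{1-|z|}$. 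Since $\int_{\mathbb D}\log^p\frac{1}{1-|z|}\,(1-|z|^2)^\beta\,dA<\infty$ for every $\beta>-1$, we get $f\in A^p_\beta$. Part (ii) then follows immediately from Theorem~\ref{thm-G} with $\alpha$ replaced by $\beta$.

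\textbf{Harmonic case.} From \eqref{Qharmonic} we have $|(h\circ\sigma_a)^{(n)}|^p\le(|(h\circ\sigma_a)^{(n)}|+|(g\circ\sigma_a)^{(n)}|)^p$, and similarly for $g$. Hence $h,g\in Q(n,p,\alpha)\subset\mathcal B$. Both parts then have logarithmic growth, so $|f|\le|h|+|g|$ does too, and $f\in a^p_\beta$. Applying Theorem~\ref{thm-I} with $\alpha$ replaced by $\beta$ (this needs $1<p<\infty$, which is assumed) gives part (i).

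The main obstacle is the embedding into the Bloch space. In particular, the chain-rule bookkeeping for $(f\circ\sigma_a)^{(n)}(0)$ when $n\ge2$ is where care is required. If that becomes messy, I would simply invoke Zhu's identification $Q(n,p,\alpha)=\mathcal B$ for $np<\alpha+1$ from \cite{Zhu-IJM-2007}.
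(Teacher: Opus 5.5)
Your argument is correct and follows the route the paper indicates. The statement is quoted from the authors' earlier work, and it is obtained (like the corollaries in that section) by combining the embedding of Lemma~\ref{lem-2.1}, namely $Q(n,p,\alpha)\subset A^p_{\alpha-np}$ and $Q_h(n,p,\alpha)\subset a^p_{\alpha-np}$, with Theorems~\ref{thm-G} and \ref{thm-I} applied with $\alpha$ replaced by $\beta=\alpha-np>-1$. Citing Zhu's identification $Q(n,p,\alpha)=\mathcal B$ for $np<\alpha+1$, and passing to $h$ and $g$ in the harmonic case as you do, legitimately supplies that embedding.

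One caveat concerns your optional self-contained proof of $Q(n,p,\alpha)\subset\mathcal B$: Step~3 does not work as written. Step~2 only tells you that the combination $\sum_{k=1}^n c_{n,k}\bar a^{\,n-k}(1-|a|^2)^k f^{(k)}(a)$ is bounded. Isolating the top term would require bounds on $(1-|a|^2)^k|f^{(k)}(a)|$ for $k<n$, and that is the Bloch property you are trying to prove. So the induction is circular, and the higher-derivative characterization of $\mathcal B$ cannot yet be invoked.

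A correct argument uses M\"obius invariance once more. Put $g=f\circ\sigma_a$ and $M=\sup_c|(f\circ\sigma_c)^{(n)}(0)|<\infty$. Since $\sigma_a\circ\sigma_b(z)=\sigma_c(\lambda z)$ with $c=\sigma_a(b)$ and $|\lambda|=1$, the function $\Psi(b)=(g\circ\sigma_b)^{(n)}(0)$ satisfies $|\Psi|\le M$ on $\mathbb D$. Using $\sigma_b^{(j)}(0)=-j!\,\bar b^{\,j-1}(1-|b|^2)$, one finds
\[
\frac{1}{2\pi}\int_0^{2\pi}\Psi(re^{i\theta})\,e^{i(n-1)\theta}\,d\theta=-n!\,r^{n-1}(1-r^2)\,g'(0),
\]
because only the monomials $\bar b^{\,n-1}$ and $b\,\bar b^{\,n}$ survive the averaging. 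Hence $(1-|a|^2)|f'(a)|=|g'(0)|\le C_nM$; this is the Rubel--Timoney argument. With this repair, or by relying on the citation, the proof is complete.
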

	
	To the best of our knowledge, no analogue of Gabriel's or Frazer's problem has been established for the M\"obius invariant spaces $Q(n,p,\alpha)$ and $Q_h(n,p,\alpha)$. The purpose of this section is to fill this gap by deriving the corresponding inequalities as applications of our main results.
	\vspace{1mm}
	
	As a preliminary ingredient, we recall the following embedding theorem established in \cite[Lemma 2.1]{Halder-Kumar-P1-2026}, which connects M\"obius invariant spaces with weighted Bergman spaces.
	\begin{lem} \label{lem-2.1} \cite[Lemma 2.1]{Halder-Kumar-P1-2026}
		Let $0 < p < \infty$ and $\alpha>np-1$. Then $Q(n,p,\alpha)\subset A_{\alpha-np}^p$ and $Q_h(n,p,\alpha)\subset a_{\alpha-np}^p$.
	\end{lem}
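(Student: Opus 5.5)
The statement just before this point is Lemma~\ref{lem-2.1}: the embeddings $Q(n,p,\alpha)\subset A^p_{\alpha-np}$ and $Q_h(n,p,\alpha)\subset a^p_{\alpha-np}$ for $\alpha>np-1$. Note that $\alpha-np>-1$, so the target spaces are genuine weighted Bergman spaces. The plan is to reduce everything to a growth estimate for the $n$-th derivative and then integrate $n$ times along radii.

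\emph{Step 1: growth of the $n$-th derivative.} Fix $a\in\mathbb D$ and let $F=f\circ\sigma_a$. Since $|F^{(n)}|^p$ is subharmonic, the sub-mean value property on the disk $|z|<1/2$ gives
\[
|F^{(n)}(0)|^p\lesssim \int_{|z|<1/2}|F^{(n)}|^p(1-|z|^2)^\alpha\,dA\le \|f\|^p_{Q(n,p,\alpha)} .
\]
Next express $F^{(n)}(0)$ through $f^{(k)}(a)$, $k\le n$, using the chain rule (Faà di Bruno) together with $\sigma_a'(0)=-(1-|a|^2)$ and the higher derivatives $\sigma_a^{(j)}(0)=O\bigl((1-|a|^2)|a|^{j-1}\bigr)$. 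An induction on $n$ then yields the standard conclusion
\[
\sup_a(1-|a|^2)^n|f^{(n)}(a)|\lesssim \|f\|_{Q(n,p,\alpha)}.
\]
An alternative is to quote Zhu's result that $Q(n,p,\alpha)$ is contained in the Bloch-type space with this control. A cleaner route is to apply the bound with the dilated derivatives and use the equivalence of higher-order Bloch norms, $\sup(1-|z|^2)^n|f^{(n)}|\approx\sup(1-|z|^2)|f'|$ modulo polynomials; the lower-order Taylor terms are just a polynomial, hence bounded.

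\emph{Step 2: integrate to $f$ and conclude.} Integrating radially $n$ times, the bound $|f^{(n)}(z)|\lesssim(1-|z|)^{-n}$ gives $|f(z)|\lesssim 1+\log\frac{1}{1-|z|}$. This log growth is independent of $n$: the first $n-1$ integrations of $(1-|z|)^{-n}$ lower the power down to $(1-|z|)^{-1}$, and the last one produces the logarithm. Hence
\[
\int_{\mathbb D}|f|^p(1-|z|^2)^{\alpha-np}\,dA \lesssim \int_0^1 \Bigl(1+\log\tfrac{1}{1-r}\Bigr)^p(1-r)^{\alpha-np}\,dr<\infty,
\]
which is finite precisely because $\alpha-np>-1$. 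Here $f(0)$ and the finitely many initial derivatives enter only as constants, so a genuine function in $Q$ lies in $A^p_{\alpha-np}$, which is the asserted embedding.

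\emph{Step 3: the harmonic case.} For $f=h+\overline g\in Q_h(n,p,\alpha)$, the defining integrand in \eqref{Qharmonic} dominates both $|(h\circ\sigma_a)^{(n)}|^p$ and $|(g\circ\sigma_a)^{(n)}|^p$. Therefore $h,g\in Q(n,p,\alpha)$, Steps 1--2 give $h,g\in A^p_{\alpha-np}$, and the quasi-triangle inequality $|f|^p\le 2^p(|h|^p+|g|^p)$ yields $f\in a^p_{\alpha-np}$.

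The main obstacle is Step 1: controlling $f^{(n)}(a)$ rather than $F^{(n)}(0)$ when $n\ge2$, because the chain rule mixes in lower derivatives of $f$. I would handle this by induction on $k\le n$, with the base case being the point estimate for $f'$ in the form $|f'(a)|(1-|a|^2)\lesssim\ldots$. Should that bookkeeping be messy, an alternative is to bound $|f^{(n)}(a)|^p(1-|a|)^{np}$ directly via subharmonicity on the pseudo-hyperbolic disk $\Delta(a,1/2)$, where $1-|z|^2\approx 1-|a|^2$. Under the change of variables $z=\sigma_a(w)$ one has $|F^{(n)}(w)|^p\,dA(w)$ on one side and $|f^{(n)}(z)|^p\,dA(z)$ on the other. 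Comparing these requires the same chain-rule terms as before; the lower-order terms are handled inductively by the Bloch estimates already obtained.
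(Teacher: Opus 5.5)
Your Steps 2 and 3 are fine, but Step 1 is not actually proved, and it is the only nontrivial step. The paper only quotes this lemma from the authors' earlier work, so there is no proof here to compare with; your argument has to stand on its own. The sub-mean-value bound correctly gives $|(f\circ\sigma_a)^{(n)}(0)|\lesssim\|f\|_{Q(n,p,\alpha)}$. Fa\`a di Bruno with $\sigma_a^{(j)}(0)=-j!\,(1-|a|^2)\bar a^{\,j-1}$ then turns this into control of a mixed quantity only:
\[
(f\circ\sigma_a)^{(n)}(0)=\sum_{k=1}^{n}(-1)^k\binom{n-1}{k-1}\frac{n!}{k!}\,\bar a^{\,n-k}(1-|a|^2)^k f^{(k)}(a).
\]
Here all orders $k=1,\dots,n$ enter with coefficients of comparable size as $|a|\to1$. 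A bound on this sum at each point does not bound its individual terms, and nothing in the hypothesis supplies the lower-order bounds an induction would need. Your ``base case'' $(1-|a|^2)|f'(a)|\lesssim\|f\|_{Q(n,p,\alpha)}$ is exactly the inclusion $Q(n,p,\alpha)\subset\mathcal B$ that Step 1 is supposed to establish. Your fallbacks are circular. The equivalence of higher-order Bloch seminorms presupposes $\sup(1-|z|^2)^n|f^{(n)}(z)|<\infty$, and the pseudo-hyperbolic-disk variant explicitly uses ``Bloch estimates already obtained''. Only quoting Zhu's theorem ($Q(n,p,\alpha)=\mathcal B$ for $np<\alpha+1$) closes Step 1 as written.

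To keep your route self-contained, you can repair Step 1 as follows. Fix $a$, put $g=f\circ\sigma_a$, and note that $\sigma_a\circ\sigma_b=\sigma_c\circ R$ with $R$ a rotation. Hence $|(g\circ\sigma_b)^{(n)}(0)|\le M:=\sup_c|(f\circ\sigma_c)^{(n)}(0)|\lesssim\|f\|_{Q(n,p,\alpha)}$ for every $b$. Apply the displayed expansion to $g$ at $b=re^{it}$, multiply by $e^{i(n-1)t}$, and average over $t$. By orthogonality of the Taylor coefficients only the $k=1$ term survives, which gives $n!\,r^{n-1}(1-r^2)|g'(0)|\le M$, i.e.\ $(1-|a|^2)|f'(a)|\lesssim M$ (take $r=1/2$).

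The Bloch detour is in fact unnecessary. The term $a=0$ of the supremum alone (where $\sigma_0(z)=-z$) gives $\int_{\mathbb D}|f^{(n)}|^p(1-|z|^2)^\alpha\,dA\le\|f\|^p_{Q(n,p,\alpha)}$. The standard derivative characterization of weighted Bergman spaces says $f\in A^p_\beta$ iff $(1-|z|^2)^nf^{(n)}\in L^p(dA_\beta)$, for all $0<p<\infty$ and $\beta>-1$. Taking $\beta=\alpha-np$, this gives $f\in A^p_{\alpha-np}$ at once, and this is precisely where $\alpha>np-1$ is needed. The harmonic case then follows by applying it to $h$ and $g$, as in your Step 3. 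Your repaired route proves more (logarithmic growth, hence $f\in A^q_\beta$ for all $q>0$ and $\beta>-1$). However, it needs the full Bloch inclusion, whereas the direct route uses no M\"obius invariance at all.
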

	We are now in a position to combine the embedding in Lemma \ref{lem-2.1} with the main results established in the previous section to derive Gabriel-type and Frazer-type inequalities for M\"obius invariant spaces.
	\vspace{1mm}
	
	Our first application is a Gabriel-type inequality for the analytic M\"obius invariant space $Q(n,p,\alpha)$, which follows immediately from Theorem \ref{Theorem-2.1} and Lemma \ref{lem-2.1}.
	\begin{cor}\label{Theorem-2.1-a}
		Let $f \in Q(n,p,\alpha)$, where $ 1< p < \infty$ and $\alpha>np-1$. If $C$ is any convex curve in $\mathbb{D}$, then$$\int_C |f(z)|^p (1-|z|^2)^{\alpha+1-np} |dz| \leq D_{C,\alpha} \int_{\mathbb{D}} |f(z)|^p (1-|z|^2)^{\alpha-np} dA(z).$$
		Here, $D_{C,\alpha} = \frac{4\pi}{(1-R^2)^{\alpha+1-np}}$, $-1 < \alpha < \infty$ and $R=\sup_{z \in C} |z|$.
	\end{cor}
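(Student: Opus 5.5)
The plan is to deduce the corollary directly from Theorem~\ref{Theorem-2.1} with the weight exponent $\alpha$ replaced by $\beta:=\alpha-np$. First, the standing hypothesis $\alpha>np-1$ gives $\beta>-1$, so $\beta$ is an admissible weight parameter, and $dA_\beta$ and $A^p_\beta$ are well defined. Second, Lemma~\ref{lem-2.1} gives $Q(n,p,\alpha)\subset A^p_{\alpha-np}=A^p_\beta$. Hence every $f\in Q(n,p,\alpha)$ is analytic on $\mathbb D$ and satisfies
\[
\int_{\mathbb D}|f(z)|^p(1-|z|^2)^{\alpha-np}\,dA(z)<\infty .
\]
In particular, the right-hand side of the claimed inequality is finite. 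This is the only place the hypothesis $f\in Q(n,p,\alpha)$ is used.

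Third, apply Theorem~\ref{Theorem-2.1} to $f\in A^p_\beta$ with the given convex curve $C$ and $R=\sup_{z\in C}|z|$. It yields
\[
\int_C |f(z)|^p(1-|z|^2)^{1+\beta}\,|dz|\le \frac{4\pi}{(1-R^2)^{1+\beta}}\int_{\mathbb D}|f(z)|^p(1-|z|^2)^{\beta}\,dA(z).
\]
Substituting $1+\beta=\alpha+1-np$ gives exactly the stated inequality with $D_{C,\alpha}=4\pi/(1-R^2)^{\alpha+1-np}$. The restriction $p>1$ is not actually needed, since Theorem~\ref{Theorem-2.1} holds for all $p>0$; it is simply inherited from the section's setting.

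There is essentially no obstacle here. The one point to check is that Theorem~\ref{Theorem-2.1} really is stated for the full range $-1<\alpha<\infty$, so that it may be invoked at the shifted parameter $\beta$. One should also note that $R<1$, because $C$ is a compact curve lying in $\mathbb D$, so the constant is finite. If the embedding in Lemma~\ref{lem-2.1} were only known as a set inclusion without norm control, this would not matter: the inequality compares the curve integral with the $A^p_\beta$-integral of the same $f$, and never with $\|f\|_{Q(n,p,\alpha)}$.
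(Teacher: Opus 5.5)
Your reduction is exactly what the paper intends. The paper omits the proof, saying the corollary follows by combining Theorem~\ref{Theorem-2.1} with the embedding of Lemma~\ref{lem-2.1}. Your bookkeeping is also fine: $\beta=\alpha-np>-1$, $1+\beta=\alpha+1-np$, only the set inclusion is needed, $p>1$ is superfluous, and $R<1$.

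The trouble is in the result you invoke: Theorem~\ref{Theorem-2.1} is misnormalized. Its proof ends with $\int_C|f|^p(1-|z|^2)^{1+\alpha}\,|dz|\le D_{C,\alpha}\|f\|_{A^p_\alpha}^p$, and $\|f\|_{A^p_\alpha}^p=(1+\alpha)\int_{\mathbb D}|f|^p(1-|z|^2)^{\alpha}\,dA$. So the displayed statement, with the unnormalized integral on the right, claims more than is proved by the factor $1+\alpha$ whenever $\alpha>0$. Applied at $\beta$, what the argument actually gives is
\[
\int_C|f(z)|^p(1-|z|^2)^{\alpha+1-np}\,|dz|\le \frac{4\pi(\alpha+1-np)}{(1-R^2)^{\alpha+1-np}}\int_{\mathbb D}|f(z)|^p(1-|z|^2)^{\alpha-np}\,dA(z),
\]
which yields the stated constant only when $\alpha\le np$.

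This is not a cosmetic loss: the corollary as literally stated is false once $\gamma:=\alpha+1-np$ is large.
\begin{itemize}
\item Take $f\equiv1$, which lies in $Q(n,p,\alpha)$ because $(1\circ\sigma_a)^{(n)}\equiv0$, and let $C=\{|z|=R\}$.
\item The left side equals $2\pi R(1-R^2)^{\gamma}$. Since $\int_{\mathbb D}(1-|z|^2)^{\gamma-1}\,dA(z)=1/\gamma$, the right side equals $4\pi/\bigl(\gamma(1-R^2)^{\gamma}\bigr)$.
\item Their ratio is $\tfrac12\gamma R(1-R^2)^{2\gamma}$. Choose $R=1/(2\sqrt{\gamma})$ and use Bernoulli's inequality $(1-\tfrac{1}{4\gamma})^{2\gamma}\ge\tfrac12$. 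The ratio is then at least $\sqrt{\gamma}/8$, which exceeds $1$ for $\gamma>64$.
\end{itemize}
Concretely, $n=1$, $p=2$, $\alpha=101$, $R=1/20$ gives about $0.245$ on the left against $0.161$ on the right. The same example refutes Theorem~\ref{Theorem-2.1} itself for $\alpha>63$.

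So the point that needed checking was not the admissible range of $\alpha$ in Theorem~\ref{Theorem-2.1} but the normalization of its right-hand side. With the factor $\alpha+1-np$ restored (equivalently, with $\|f\|^p_{A^p_{\alpha-np}}$ on the right), your argument is complete.
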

	
	Following the analytic case, an application of Theorem \ref{Theorem-2.2} together with Lemma \ref{lem-2.1} yields the corresponding Gabriel-type inequality for the space $Q_h(n,p,\alpha)$.
	
	\begin{cor}\label{Theorem-2.2-a}
		Let $f \in Q_h(n,p,\alpha)$, $ 1< p < \infty$ and $\alpha>np-1$. Suppose $C$ is any convex curve in $\mathbb{D}$. Then$$\int_C |f(z)|^p(1-|z|^2)^{\alpha+1-np}|dz| \leq K_{p,\alpha,C} \int_{\mathbb{D}}|f(z)|^p(1-|z|^2)^{\alpha-np} dA(z),$$where $K_{p,\alpha,C}$ is a positive constant depending only on $p$, $\alpha$, and $C$.
	\end{cor}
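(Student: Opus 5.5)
The plan is to obtain Corollary \ref{Theorem-2.2-a} as a direct specialization of Theorem \ref{Theorem-2.2}, using the shifted weight $\beta:=\alpha-np$. Since $\alpha>np-1$, we have $\beta>-1$, so $\beta$ is an admissible weight parameter in Theorem \ref{Theorem-2.2}.

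\emph{Step 1: membership in a Bergman space.} Let $f\in Q_h(n,p,\alpha)$. By Lemma \ref{lem-2.1} (the harmonic half), $f\in a^p_{\beta}$, so
\[
\int_{\mathbb D}|f(z)|^p(1-|z|^2)^{\beta}\,dA(z)<\infty .
\]
This is the only place the hypothesis $\alpha>np-1$ is used.

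\emph{Step 2: apply Theorem \ref{Theorem-2.2}.} Apply Theorem \ref{Theorem-2.2} to $f$ with exponent $p$ and weight $\beta$. For any convex curve $C\subset\mathbb D$ this gives
\[
\int_C|f(z)|^p(1-|z|^2)^{1+\beta}|dz|\le K_{p,\beta,C}\int_{\mathbb D}|f(z)|^p(1-|z|^2)^{\beta}\,dA(z).
\]
Substituting $\beta=\alpha-np$, so that $1+\beta=\alpha+1-np$, yields exactly the claimed inequality. The constant $K_{p,\alpha-np,C}$ depends only on $p$, $\alpha$, $n$ and $C$. Since $n$ is a fixed parameter of the space, the constant in the statement depends only on $p$, $\alpha$ and $C$, as claimed.

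\emph{Where care is needed.} There is no real analytic obstacle, because Theorem \ref{Theorem-2.2} already covers the whole range $0<p<\infty$; the restriction $1<p<\infty$ in the corollary is not needed for the argument. The point to check is that Theorem \ref{Theorem-2.2} is valid for every weight in $(-1,\infty)$, including negative $\beta$, which can occur here. I expect its proof to reduce to Theorem \ref{Theorem-2.1}, which is stated for all $-1<\alpha<\infty$, so this should cause no difficulty. I would also note explicitly that the right-hand side is finite by Step 1, so the inequality is meaningful rather than trivially $\infty\le\infty$.
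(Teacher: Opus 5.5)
Your proposal is correct and matches the paper's argument. The paper omits the proof, saying it follows immediately by combining the embedding $Q_h(n,p,\alpha)\subset a^p_{\alpha-np}$ of Lemma \ref{lem-2.1} with Theorem \ref{Theorem-2.2} applied at the admissible weight $\alpha-np>-1$. Your side remarks are also accurate: the restriction $p>1$ is not needed, and the constant implicitly depends on $n$ as well.
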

	
	As an immediate consequence of the circular version of Gabriel's inequality, Theorem \ref{Theorem-2.3}, together with Lemma \ref{lem-2.1}, yields the following Frazer-type inequality for the harmonic M\"obius invariant space $Q_h(n,p,\alpha)$.
	\begin{cor}\label{Theorem-2.3-a}
		Let $f \in Q_h(n,p,\alpha)$ with  $ 1< p < \infty$ and $\alpha>np-1$. Then, for every $0<r<1$,
		\[
		\int_{C_r}|f(z)|^p(1-|z|^2)^{\alpha+1-np} |dz|
		\leq
		2\, \pi \,r(\alpha+1-np)\,
		\int_{\mathbb D}|f(z)|^p(1-|z|^2)^{\alpha-np}\,dA(z),
		\]
		where $	C_r=\{z\in\mathbb D:|z|=r\}$.
	\end{cor}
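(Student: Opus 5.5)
Corollary \ref{Theorem-2.3-a} will follow by applying Theorem \ref{Theorem-2.3} with the weight parameter $\alpha$ replaced by $\beta:=\alpha-np$. The hypothesis $\alpha>np-1$ gives exactly $\beta>-1$, which is the admissible range in Theorem \ref{Theorem-2.3}. So the argument has three steps: check membership in the right space, substitute the parameter, and compare the constants.

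\emph{Membership.} Since $f\in Q_h(n,p,\alpha)$ with $0<p<\infty$ and $\alpha>np-1$, Lemma \ref{lem-2.1} gives $f\in a^p_{\alpha-np}=a^p_\beta$. In particular the right-hand side of the claimed inequality is finite, because
\[
\int_{\mathbb D}|f(z)|^p(1-|z|^2)^{\beta}\,dA(z)=\frac{1}{\beta+1}\,\|f\|^p_{a^p_\beta}<\infty .
\]

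\emph{Substitution.} Theorem \ref{Theorem-2.3}, applied to $f\in a^p_\beta$ for fixed $0<r<1$, gives
\[
\int_{C_r}|f(z)|^p(1-|z|^2)^{1+\beta}\,|dz|\le 2\pi r(\beta+1)\int_{\mathbb D}|f(z)|^p(1-|z|^2)^{\beta}\,dA(z).
\]
Since $1+\beta=\alpha+1-np$, this is the displayed inequality with constant $2\pi r(\alpha+1-np)$, which is the constant in the statement.

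\emph{The one point to check.} The only thing that could go wrong is if the constant in Theorem \ref{Theorem-2.3} were not literally $(\beta+1)$ times the unweighted-measure integral. I would re-derive it quickly to confirm. Subharmonicity-type sub-mean-value estimates for $|f|^p$ are not available for harmonic $f$ when $p<1$, so I would not rely on them. Instead, the direct estimate uses that the weight is constant on $C_r$:
\[
\int_{C_r}|f|^p(1-|z|^2)^{1+\beta}\,|dz| = (1-r^2)^{1+\beta}\, r\int_0^{2\pi}|f(re^{i\theta})|^p\,d\theta .
\]
This is then controlled by the area integral, following whatever mechanism Theorem \ref{Theorem-2.3} uses. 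Since that theorem is assumed proved for all $\beta>-1$, the corollary reduces to the parameter substitution above, and the constant transfers exactly.
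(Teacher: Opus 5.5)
Your argument is exactly the paper's (omitted) one: Lemma~\ref{lem-2.1} puts $f$ in $a^p_{\alpha-np}$, and Theorem~\ref{Theorem-2.3} with $\alpha-np>-1$ in place of $\alpha$ gives the constant $2\pi r(\alpha+1-np)$. Your closing hedge is misplaced, though: the mechanism of Theorem~\ref{Theorem-2.3} is precisely the monotonicity of $M_p(r,f)$, which for harmonic $f$ comes from subharmonicity of $|f|^p$ and holds because $p>1$ here (it fails for $p<1$, e.g.\ for $f=1+\Re z$, where $M_p^p(r,f)$ strictly decreases), so this is exactly what you may and should rely on.
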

	Our final application follows from Theorem \ref{Theorem-2.4} together with Lemma \ref{lem-2.1}. It establishes analogues of Theorems \ref{thm-C} and \ref{thm-E} for the space $Q_h(n,p,\alpha)$ on the union of two intersecting diameters.
	\begin{cor}\label{Theorem-2.4-a}
		Let $f \in Q_h(n,p,\alpha)$ for $ 1< p < \infty$ and $\alpha>np-1$, and $f(0)=0$. Then the following inequality holds:$$\int_{D_0+D_1} |f(z)|^p (1-|z|^2)^{1+\alpha-np} |dz| \le B_p(\theta, \alpha) \|f\|^p_{a_{\alpha-np}^p},$$
		where $D_0, D_1$ are two diameters of $\mathbb{T}$, $\theta$ is the acute angle between them, and $$B_p(\theta, \alpha)= \frac{2^{1+\alpha-np}}{\sin\frac{\theta}{2} + \cos\frac{\theta}{2}} \frac{2^{p/2}}{(1-|\cos(\frac{\pi}{p})|)^{p/2}}.$$
	\end{cor}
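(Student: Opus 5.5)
The plan is to reduce Corollary~\ref{Theorem-2.4-a} directly to Theorem~\ref{Theorem-2.4}, applied with the weight parameter $\beta:=\alpha-np$ in place of $\alpha$. First, Lemma~\ref{lem-2.1} gives $Q_h(n,p,\alpha)\subset a^p_{\alpha-np}$, so $f\in a^p_\beta$. Since $f(0)=0$ and $p>1$ by hypothesis, the only remaining requirement of Theorem~\ref{Theorem-2.4} is on the weight parameter.

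Next, I would invoke Theorem~\ref{Theorem-2.4} with $\beta$ in place of $\alpha$. It yields
\[
\int_{D_0+D_1}|f(z)|^p(1-|z|^2)^{1+\beta}|dz|\le \frac{2^{1+\beta}}{\sin\frac{\theta}{2}+\cos\frac{\theta}{2}}\frac{2^{p/2}}{(1-|\cos(\frac{\pi}{p})|)^{p/2}}\|f\|^p_{a^p_\beta}.
\]
Substituting $\beta=\alpha-np$ reproduces the left-hand side of the corollary and exactly the constant $B_p(\theta,\alpha)$, with the norm $\|f\|^p_{a^p_{\alpha-np}}$ on the right. No extra normalization is needed, because both statements use the same norm convention.

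The main obstacle is that Theorem~\ref{Theorem-2.4} is stated only for weight parameters $\geq 0$, while the corollary assumes only $\alpha>np-1$, i.e.\ $\beta>-1$. For $\beta\ge0$, that is $\alpha\ge np$, the argument above is complete. For $-1<\beta<0$, I would go back into the proof of Theorem~\ref{Theorem-2.4} and check where $\alpha\ge0$ enters.

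I expect it enters only through an elementary comparison of the form $(1-|z|^2)^{1+\beta}\le 2^{1+\beta}(1-|z|)^{1+\beta}$ along the diameters, combined with a Zhu/Frazer-type estimate with weight $(1-|x|)^{\beta+1}$. The comparison holds whenever $1+\beta>0$, so the sign of $\beta$ should not matter there. The other ingredient is Lemma~\ref{Lemma-2.1}, which is stated for general $\alpha>-1$, so it imposes no restriction either. If that check succeeds, the theorem extends verbatim to $\beta>-1$. Failing that, I would state the corollary under the additional restriction $\alpha\ge np$, which is exactly the range where the reduction to Theorem~\ref{Theorem-2.4} is immediate.
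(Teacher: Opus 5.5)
For $\alpha\ge np$ your reduction is exactly the paper's argument. The paper only says the corollary follows from Theorem~\ref{Theorem-2.4} and Lemma~\ref{lem-2.1}, and you are right that this leaves $np-1<\alpha<np$ uncovered. However, the check you propose for that range does not succeed, and the constant you claim to transfer is not actually established.

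\emph{The range $-1<\beta<0$.} The comparison $(1-|z|^2)^{1+\beta}\le 2^{1+\beta}(1-|z|)^{1+\beta}$ is harmless; the sign of $\beta$ enters elsewhere, through the radial kernel. Applying the Das--Kaliraj lemma to $(|h_r|+|g_r|)^p$ and integrating against $r(1-r^2)^\beta\,dr$ gives exactly
\[
I=\sum_{j=0}^{1}\int_{-1}^{1}\bigl(|h(\rho\xi_j)|+|g(\rho\xi_j)|\bigr)^p\,\kappa_\beta(|\rho|)\,d\rho,\qquad \kappa_\beta(s)=\int_s^1(1-u^2)^\beta\,du .
\]
The paper uses $\beta\ge0$ in the step $(1-u^2)^\beta\ge(1-u)^\beta$, which yields $\kappa_\beta(s)\ge(1-s^2)^{1+\beta}/\bigl((1+\beta)2^{1+\beta}\bigr)$.

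For $-1<\beta<0$ this bound is false near $s=1$. There $\kappa_\beta(s)\sim 2^\beta(1-s)^{1+\beta}/(1+\beta)$, while the right-hand side is $\sim(1-s)^{1+\beta}/(1+\beta)$, and $2^\beta<1$. The sharp substitute, from $(1+u)^\beta\ge2^\beta$, is $\kappa_\beta(s)\ge(1-s^2)^{1+\beta}/\bigl(2(1+\beta)\bigr)$. This replaces $2^{1+\beta}$ by the larger number $2$ in the final constant.

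Your Zhu-type route hits the same wall. Ending with $2^{1+\beta}$ would need $\kappa_\beta(s)\ge(1-s)^{1+\beta}/(1+\beta)$. For $\beta<0$ only $2^\beta(1-s)^{1+\beta}/(1+\beta)$ holds, and after the weight comparison you again get $2$. So neither you nor the paper obtains $B_p(\theta,\alpha)$ in this range; only your fallback of restricting to $\alpha\ge np$ is justified.

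\emph{The constant.} Your claim that substitution gives ``exactly $B_p(\theta,\alpha)$'' holds only relative to the \emph{statement} of Theorem~\ref{Theorem-2.4}. Its proof actually ends with $\pi A_p(\theta,\alpha)$. The $\pi$ comes from $\int_0^1\!\int_0^{2\pi}G(re^{it})\,r(1-r^2)^\alpha\,dt\,dr=\frac{\pi}{1+\alpha}\int_{\mathbb D}G\,dA_\alpha$.

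The $\pi$-free constant is in fact false. Take $p=2$, $n=1$, $\alpha=2$, so $\beta=0$ and $B_2(\theta,2)=4/(\sin\frac\theta2+\cos\frac\theta2)<4$ for $0<\theta\le\pi/2$. Take $D_0=[-1,1]$ and $f_r(z)=2\Re\frac{rz}{1-rz}$. Then $f_r(0)=0$, and $f_r\in Q_h(1,2,2)$ because its derivatives are bounded and $|\sigma_a'(z)|^2(1-|z|^2)^2=(1-|\sigma_a(z)|^2)^2\le1$. One computes
\[
\|f_r\|^2_{a^2_0}=2\sum_{k\ge1}\frac{r^{2k}}{k+1}\sim2\log\frac1{1-r},\qquad \int_{-1}^1|f_r(x)|^2(1-x^2)\,dx=4\int_{-1}^1\frac{r^2x^2(1-x^2)}{(1-rx)^2}\,dx\sim8\log\frac1{1-r}.
\]
So the ratio tends to at least $4$.

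What your reduction really proves is the inequality with constant $\pi B_p(\theta,\alpha)$ when $\alpha\ge np$. When $np-1<\alpha<np$, it proves the inequality with constant $\frac{2\pi}{\sin\frac\theta2+\cos\frac\theta2}\,\frac{2^{p/2}}{(1-|\cos\frac\pi p|)^{p/2}}$.
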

	Each of the above corollaries follows immediately by combining the corresponding results established in Subsection \ref{subsec-1.3} with the embedding stated in Lemma \ref{lem-2.1}. We therefore omit the proofs.
	\section{\textbf{Proofs of the Main Results}}
	
	\begin{pf} [{\bf Proof of Theorem \ref{Theorem-2.1}}]
		Let $f\in A_\alpha^p$, where $p>0$, and for each $0<r<1$ define
		$
		f_r(z):=f(rz).
		$
		Since $f$ is analytic in $\mathbb D$, it follows that $f_r\in H^p$.
		
		Let
		$
		R=\sup_{z\in C}|z|
		$. Since $C$ is compact, the supremum is finite and $R<1$. For every $r\in(R,1)$, define
		$$
		C_r=\{\xi\in\mathbb D:r\xi\in C\}.
		$$
		Since $C$ is convex, it follows that $C_r$ is also a convex curve contained in $\mathbb D$.
		
		Applying Theorem~\ref{thm-A} to the function $f_r$, we obtain
		$$
		\int_{C_r}|f_r(z)|^p\,|dz|
		\le
		2\int_{|z|=1}|f_r(z)|^p\,|dz|.
		$$
		Multiplying both sides by $r$ and making the change of variables $w=rz$, we obtain
		$$
		\int_C|f(w)|^p\,|dw|
		\le
		2r\int_{|z|=1}|f(rz)|^p\,|dz|.
		$$
		Hence,
		\begin{align}
			\int_C|f(z)|^p\,|dz|
			&\le
			2r\int_0^{2\pi}|f(re^{i\theta})|^p\,d\theta\notag\\
			&\le
			2\int_0^{2\pi}|f(re^{i\theta})|^p\,d\theta,
			\label{eq:curve-estimate}
		\end{align}
		since $r<1$.
		
		Since \eqref{eq:curve-estimate} holds for every $R<r<1$, multiplying both sides by $(1-r^2)^\alpha r$ and integrating from $R$ to $1$, we obtain
		$$
		\left(\int_C|f(z)|^p\,|dz|\right)
		\int_R^1(1-r^2)^\alpha r\,dr
		\le
		2\int_R^1
		\left(
		\int_0^{2\pi}|f(re^{i\theta})|^p\,d\theta
		\right)
		(1-r^2)^\alpha r\,dr.
		$$
		Since
		$$
		dA_\alpha(z)
		=(1+\alpha)(1-|z|^2)^\alpha\,dA(z),
		\qquad
		dA(z)=\frac{1}{\pi}\,dx\,dy,
		$$
		we have
		$$
		\|f\|_{A_\alpha^p}^p
		=
		\frac{1+\alpha}{\pi}
		\int_0^1
		\int_0^{2\pi}
		|f(re^{i\theta})|^p
		(1-r^2)^\alpha
		r\,d\theta\,dr.
		$$
		Therefore,
		$$
		\int_0^1
		\int_0^{2\pi}
		|f(re^{i\theta})|^p
		(1-r^2)^\alpha
		r\,d\theta\,dr
		=
		\frac{\pi}{1+\alpha}
		\|f\|_{A_\alpha^p}^p.
		$$
		Consequently,
		\begin{align}
			\int_C|f(z)|^p\,|dz|
			&\le
			\frac{2}{\displaystyle\int_R^1(1-r^2)^\alpha r\,dr}
			\cdot
			\frac{\pi}{1+\alpha}
			\|f\|_{A_\alpha^p}^p\notag\\
			&=
			\frac{2\pi}
			{\displaystyle\frac{(1-R^2)^{\alpha+1}}{2}}
			\|f\|_{A_\alpha^p}^p\notag\\
			&=
			\frac{4\pi}{(1-R^2)^{\alpha+1}}
			\|f\|_{A_\alpha^p}^p,
			\label{eq:curve-final}
		\end{align}
		where we have used
		$$
		\int_R^1(1-r^2)^\alpha r\,dr
		=
		\frac{(1-R^2)^{\alpha+1}}{2(1+\alpha)}.
		$$
		
		Finally, since $\alpha>-1$, we have
		$
		(1-|z|^2)^{\alpha+1}\le1
		$, and therefore
		\begin{equation}\label{eq:weight}
			\int_C|f(z)|^p(1-|z|^2)^{\alpha+1}\,|dz|
			\le
			\int_C|f(z)|^p\,|dz|.
		\end{equation}
		Combining \eqref{eq:curve-final} and \eqref{eq:weight}, we conclude that
		$$
		\int_C|f(z)|^p(1-|z|^2)^{\alpha+1}\,|dz|
		\le
		D_{C,\alpha}\|f\|_{A_\alpha^p}^p,
		$$
		where
		$$
		D_{C,\alpha}
		=
		\frac{4\pi}{(1-R^2)^{\alpha+1}}.
		$$
		This completes the proof.
	\end{pf}

	\begin{pf} [{\bf Proof of Theorem \ref{Theorem-2.2}}]
		Let
		\[
		f=u+iv=h+\overline{g},
		\]
		where $h$ and $g$ are analytic in $\mathbb{D}$. Define
		$F=h+g$. Then $F$ is analytic and $u=\Re(F)$. Since $|u(z)|\leq |f(z)|$ for $ z\in\mathbb{D}$, it follows immediately that
		\[
		\|u\|_{a_\alpha^p}\leq \|f\|_{a_\alpha^p}< \infty.
		\]
		Invoking the analytic Bergman estimate \cite[Corollary 6]{Pelaez-Rattya-AMP-2020} to the function $F$, we conclude that there exists a constant $c_{p,\alpha}>0$, depending only on $p$ and $\alpha$, such that
		\[
		\|F\|_{A_\alpha^p}\leq c_{p,\alpha}\,\|u\|_{a_\alpha^p}< \infty.
		\]
		Therefore,
		\begin{equation}\label{-1c}
			\int_{\mathbb D}|F(z)|^p\,dA_\alpha(z)
			\leq
			c_{p,\alpha}^p
			\int_{\mathbb D}|u(z)|^p\,dA_\alpha(z)\leq c_{p,\alpha}^p \int_{\mathbb D}|f(z)|^p\,dA_\alpha(z) .
		\end{equation}
		
		Since $F=h+g\in A_\alpha^p$, applying Theorem \ref{Theorem-2.1} to $F$ gives
		\[
		\int_C |F(z)|^p(1-|z|^2)^{1+\alpha}|dz|
		\leq
		D_{C,\alpha}
		\int_{\mathbb D}|F(z)|^p\,dA_\alpha(z),
		\]
		where
		\[
		D_{C,\alpha}
		=
		\frac{4\pi}
		{(1-R^2)^{1+\alpha}}
		\qquad \mbox{and} \qquad
		R=\sup_{z\in C}|z|.
		\]
		Combining the above inequality with \eqref{-1c}, we arrive at
		\[
		\int_C |F(z)|^p(1-|z|^2)^{1+\alpha}|dz|
		\leq
		D_{C,\alpha}\,c_{p,\alpha}^p
		\int_{\mathbb D}|f(z)|^p\,dA_\alpha(z).
		\]
		Since $u=\Re(F)$, we have $|u(z)|^p\leq |F(z)|^p$, and consequently,
		\[
		\int_C |u(z)|^p(1-|z|^2)^{1+\alpha}|dz|
		\leq
		D_{C,\alpha}\,c_{p,\alpha}^p
		\int_{\mathbb D}|f(z)|^p\,dA_\alpha(z).
		\]
		Next, we apply the same argument to the harmonic function $if$. Since the real part of $if$ is $-v$, it reveals that
		\[
		\int_C |v(z)|^p(1-|z|^2)^{1+\alpha}|dz|
		\leq
		D_{C,\alpha}\,c_{p,\alpha}^p
		\int_{\mathbb D}|f(z)|^p\,dA_\alpha(z).
		\]
		
		We now make use of the following identity
		\[
		|f(z)|^p=\left(u(z)^2+v(z)^2\right)^{p/2}
		\]
		and proceed by considering the following two cases.
		
		\medskip
		
		\noindent
		\textbf{Case 1.} $p\geq2$.
		Since
		\[
		(a+b)^{p/2}
		\leq
		2^{\frac{p}{2}-1}
		\left(a^{p/2}+b^{p/2}\right),
		\qquad a,b\geq0,
		\]
		we obtain
		\[
		\left(u^2+v^2\right)^{p/2}
		\leq
		2^{\frac{p}{2}-1}
		\left(|u|^p+|v|^p\right).
		\]
		Hence,
		\[
		\begin{aligned}
			\int_C |f(z)|^p(1-|z|^2)^{1+\alpha}|dz|
			&\leq
			2^{\frac{p}{2}-1}
			\left(
			\int_C |u|^p(1-|z|^2)^{1+\alpha}|dz|
			+
			\int_C |v|^p(1-|z|^2)^{1+\alpha}|dz|
			\right)\\
			&\leq
			2^{\frac{p}{2}}
			D_{C,\alpha}\,c_{p,\alpha}^p
			\int_{\mathbb D}|f(z)|^p\,dA_\alpha(z).
		\end{aligned}
		\]
		
		\medskip
		
		\noindent
		\textbf{Case 2.} $0<p<2$.
		Since $0<p/2<1$,
		\[
		(u^2+v^2)^{p/2}
		\leq
		|u|^p+|v|^p.
		\]
		Therefore,
		\[
		\begin{aligned}
			\int_C |f(z)|^p(1-|z|^2)^{1+\alpha}|dz|
			&\leq
			\int_C |u|^p(1-|z|^2)^{1+\alpha}|dz|
			+
			\int_C |v|^p(1-|z|^2)^{1+\alpha}|dz|\\
			&\leq
			2D_{C,\alpha}\, c_{p,\alpha}^p
			\int_{\mathbb D}|f(z)|^p\,dA_\alpha(z).
		\end{aligned}
		\]
		Thus, in all cases,
		\[
		\int_C |f(z)|^p(1-|z|^2)^{1+\alpha}|dz|
		\leq
		K_{p,\alpha,C}
		\int_{\mathbb D}|f(z)|^p\,dA_\alpha(z),
		\]
		where
		\[
		K_{p,\alpha,C}
		=
		\begin{cases}
			2^{p/2}D_{C,\alpha}\, c_{p,\alpha}^p, & p\geq2,\\[2mm]
			2D_{C,\alpha}\, c_{p,\alpha}^p, & 0<p<2.
		\end{cases}
		\]
		This completes the proof.
	\end{pf}

	\begin{proof} [{\bf Proof of Theorem \ref{Theorem-2.3}}]
		Since the weight is constant on the circle $C_r$, we have
		\[
		(1-|z|^2)^\alpha=(1-r^2)^\alpha,\qquad z\in C_r.
		\]
		Consequently,
		\[
		\int_{C_r}|f(z)|^p(1-|z|^2)^{1+\alpha} |dz|
		=
		(1-r^2)^{1+\alpha}
		\int_{C_r}|f(z)|^p|dz|.
		\]
		Moreover,
		\[
		\int_{C_r}|f(z)|^p|dz|
		=
		2\pi r\,M_p^p(r,f).
		\]
		Since the integral means $M_p^p(r,f)$ are increasing with respect to $r$, it follows that
		\[
		\begin{aligned}
			\|f\|_{a_\alpha^p}^p
			&=
			(1+\alpha)\int_{\mathbb D}|f(z)|^p(1-|z|^2)^\alpha\,dA(z)\\
			&=
			2\, (1+\alpha)
			\int_0^1
			M_p^p(\rho,f)
			(1-\rho^2)^\alpha
			\rho\,d\rho\\
			&\geq
			2\, (1+\alpha)
			M_p^p(r,f)
			\int_r^1
			(1-\rho^2)^\alpha
			\rho\,d\rho.
		\end{aligned}
		\]
		A straightforward computation shows that
		\[
		\int_r^1
		(1-\rho^2)^\alpha
		\rho\,d\rho
		=
		\frac{(1-r^2)^{\alpha+1}}
		{2(\alpha+1)}.
		\]
		Substituting this into the preceding inequality yields
		\[
		M_p^p(r,f)
		\leq
		\frac{1}
		{(1-r^2)^{\alpha+1}}
		\|f\|_{a_\alpha^p}^p.
		\]
		Therefore,
		\[
		\begin{aligned}
			\int_{C_r}|f(z)|^p(1-|z|^2)^{\alpha+1}|dz|
			&= (1-r^2)^{1+\alpha}
			\int_{C_r}|f(z)|^p|dz|\\
			&=
			2\pi r(1-r^2)^{1+\alpha}
			M_p^p(r,f)\\
			&\leq
			2\pi r\,
			\|f\|_{a_\alpha^p}^p.
		\end{aligned}
		\]
		Recalling the definition of the weighted Bergman norm, we conclude that
		\[
		\int_{C_r}|f(z)|^p(1-|z|^2)^{1+\alpha}\,|dz|
		\leq 2\, \pi \,r(\alpha+1)\, 
		\int_{\mathbb D}
		|f(z)|^p(1-|z|^2)^\alpha\,dA(z).
		\]
		This establishes the asserted inequality and completes the proof.
	\end{proof}

	\begin{proof} [{\bf Proof of Lemma \ref{Lemma-2.1}}]
		For $0 < r < 1$, we define the dilated functions $h_r(z) = h(rz)$ and $g_r(z) = g(rz)$. Consequently, we define the dilated harmonic function $f_r(z) = h_r(z) + \overline{g_r(z)}$. Because $h_r$ and $g_r$ are analytic in a neighborhood containing the closed unit disk, $f_r$ has continuous boundary values and inherently belongs to the harmonic Hardy space $h^p$. Furthermore, the condition at the origin is preserved under dilation, since $h_r(0) = h(0)$ and $g_r(0) = g(0)$. Thus, we have:$$\Re(g_r(0)h_r(0)) = \Re(g(0)h(0)) \ge 0$$
		
		Applying \cite[Theorem 2.1]{Kalaj-2019}) to the boundary values of $f_r$ on the unit circle $z = e^{i\theta}$, we obtain
		$$\int_0^{2\pi} \left( |h_r(e^{i\theta})|^2 + |g_r(e^{i\theta})|^2 \right)^{p/2} \frac{d\theta}{2\pi} \le \frac{1}{(1-|\cos(\frac{\pi}{p})|)^{p/2}} \int_0^{2\pi} |h_r(e^{i\theta}) + \overline{g_r}(e^{i\theta})|^p \frac{d\theta}{2\pi}$$
		Multiply both sides of above equation by the radial weight $2r(1-r^2)^\alpha$ and integrate with respect to $r$ from $0$ to $1$, we have 
		\begin{align*}
			\int_0^1 \int_0^{2\pi} 2r(1-r^2)^\alpha & \left( |h_r(e^{i\theta})|^2 + |g_r(e^{i\theta})|^2 \right)^{p/2} \frac{d\theta}{2\pi}  dr   \\
			& \le\frac{1}{(1-|\cos(\frac{\pi}{p})|)^{p/2}} \int_0^1 \int_0^{2\pi} 2r(1-r^2)^\alpha |h_r(e^{i\theta}) + \overline{g_r}(e^{i\theta})|^p \frac{d\theta}{2\pi}  dr
		\end{align*}
		Substituting $z = re^{i\theta}$, the normalized weighted area measure is given by $dA_\alpha(z) = (1+\alpha)\frac{1}{\pi} (1-r^2)^\alpha r \, d\theta \, dr$. Transitioning from the iterated integrals to the area integral over $\mathbb{D}$ yields
		$$\int_{\mathbb{D}} (|h(z)|^2 + |g(z)|^2)^{p/2} dA_\alpha(z) \le \frac{1}{(1-|\cos(\frac{\pi}{p})|)^{p/2}} \int_{\mathbb{D}} |h(z) + \overline{g}(z)|^p dA_\alpha(z)$$Finally, taking the $1/p$-th power of both sides gives the desired inequality:$$\left( \int_{\mathbb{D}} (|h|^2 + |g|^2)^{p/2} dA_\alpha \right)^{1/p} \le \frac{1}{(1-|\cos(\pi/p)|)^{1/2}} \left( \int_{\mathbb{D}} |h+\bar{g}|^p dA_\alpha \right)^{1/p}$$This completes the proof.
	\end{proof}
	
	\begin{pf} [{\bf Proof of Theorem \ref{Theorem-2.4}}]
		Let$$L(f) = \int_{D_0+D_1} |f(z)|^p (1-|z|^2)^{1+\alpha} |dz|$$
		Suppose that $f = h + \bar{g}$ with $f(0)=g(0) = 0$. Since
		$ |f(z)| \le |h(z)| + |g(z)|$, it follows that
		\begin{equation}\label{1}
			L(f) \le \int_{D_0+D_1} (|h(z)| + |g(z)|)^p (1-|z|^2)^{1+\alpha} |dz|. 
		\end{equation}
		For $0 < r < 1$, define
		$$\Phi_r(z) = (|h(rz)| + |g(rz)|)^p.
		$$
		Since $h$ and $g$ are analytic in $\mathbb{D}$, the dilations $h_r$ and $g_r$ belong to $H^p$ for every $r \in (0,1)$.
		Hence, by \cite[Lemma 2]{Das-Kaliraj-2022},
		$$\int_{D_0+D_1} \Phi_r(z) |dz| \le \frac{1}{\sin\frac{\theta}{2} + \cos\frac{\theta}{2}} \int_{|z|=1} \Phi_r(z) |dz|.
		$$
		Writing the boundary integral in polar form yields
		$$\int_{D_0+D_1} (|h(rz)| + |g(rz)|)^p |dz| \le \frac{1}{\sin\frac{\theta}{2} + \cos\frac{\theta}{2}} \int_0^{2\pi} (|h(re^{it})| + |g(re^{it})|)^p dt.
		$$
		Multiplying both sides by $r(1-r^2)^\alpha$ and integrate over $0<r<1$, we obtain
		\begin{align}\label{2}
			&	\int_0^1 r(1-r^2)^\alpha  \int_{D_0+D_1} (|h(rz)| + |g(rz)|)^p |dz| dr \notag \\ 
			& \le \frac{1}{\sin\frac{\theta}{2} + \cos\frac{\theta}{2}} \int_0^1 r(1-r^2)^\alpha \int_0^{2\pi} (|h(re^{it})| + |g(re^{it})|)^p dt dr.
		\end{align} 
		Now set 
		$$I := \int_{D_0+D_1} \int_0^1 \left(|h(rz)| + |g(rz)|\right)^p r (1-r^2)^\alpha dr |dz|.
		$$
		Write
		$$D_j = \{t\xi_j : -1 \le t \le 1\}, \quad |\xi_j| = 1 \quad \text{for}\ j=0,1.
		$$
		Since $z=t\xi_j$ on $D_j$, we have $|dz|=dt$, and therefore
		$$I = \sum_{j=0}^1 \int_{t=-1}^1 \int_{r=0}^1 (|h(rt\xi_j)| + |g(rt\xi_j)|)^p (1-r^2)^\alpha r \,dr\, dt .
		$$
		For each fixed $t\neq 0$, let $\rho=r\,t$. Then
		$r=\rho/t$ and $dr=d\rho/t$, so that 
		$$I = \sum_{j=0}^1 \int_{t=-1}^1 \int_{\rho=0}^t (|h(\rho\xi_j)| + |g(\rho\xi_j)|)^p \left(1-\frac{\rho^2}{t^2}\right)^\alpha \frac{\rho }{t^2}d\rho dt.
		$$
		Writing $F(\rho\xi_j) = (|h(\rho\xi_j)| + |g(\rho\xi_j)|)^p$ and applying Fubini's theorem, we deduce that
		\begin{align*}
			I &= \sum_{j=0}^1 \int_{\rho=0}^1 \int_{t=\rho}^1 F(\rho\xi_j)  \left(1-\frac{\rho^2}{t^2}\right)^\alpha  \frac{\rho}{t^2} dt d\rho \\
			&+ \sum_{j=0}^1 \int_{\rho=0}^{-1} \int_{t=-1}^\rho F(\rho\xi_j) \left(1-\frac{\rho^2}{t^2}\right)^\alpha \frac{\rho}{t^2} dt d\rho.
		\end{align*}
		For the first inner integral, let $u = \frac{\rho}{t}$. 	Then the integral $$\int_{t=\rho}^1 \left(1-\frac{\rho^2}{t^2}\right)^\alpha \frac{\rho }{t^2}dt$$ 
		becomes
		$$ \int_{u=\rho}^1 (1-u^2)^\alpha du := \kappa_\alpha(\rho).$$
		Similarly, the integral $$\int_{t=-1}^\rho \left(1-\frac{\rho^2}{t^2}\right)^\alpha \frac{\rho }{t^2}dt$$ becomes
		$$ 
		- \int_{u=-\rho}^1 (1-u^2)^\alpha du = -\kappa_\alpha(-\rho).
		$$
		Consequently,
		\begin{align*}
			I &= \sum_{j=0}^1 \int_{\rho=0}^1 F(\rho\xi_j) \kappa_\alpha(\rho) d\rho+ \sum_{j=0}^1 \int_{\rho=0}^{-1} F(\rho\xi_j) (-\kappa_\alpha(-\rho)) d\rho \\
			& = \sum_{j=0}^1 \int_{\rho=0}^1 F(\rho\xi_j) \kappa_\alpha(\rho) d\rho+ \sum_{j=0}^1 \int_{\rho=-1}^0 F(\rho\xi_j) \kappa_\alpha(-\rho) d\rho.
		\end{align*}
		
		\noindent \textbf{Estimate of $\kappa_\alpha(\rho)$:} For $\rho \in [0,1]$, we have
		\[
		(1-u^2)^\alpha \ge (1-u)^\alpha,\qquad u\in[\rho,1],\ \alpha>0.
		\]
		Therefore, $$\kappa_\alpha(\rho) \ge \int_\rho^1 (1-u)^\alpha du = \frac{(1-\rho)^{\alpha+1}}{1+\alpha}.
		$$
		Since $1+\rho\le2$, we obtain
		$$ 1-\rho = \frac{1-\rho^2}{1+\rho} \ge \frac{1-\rho^2}{2} ,
		$$
		and hence
		$$ (1-\rho)^{\alpha+1} \ge \frac{1}{2^{1+\alpha}} (1-\rho^2)^{1+\alpha}.
		$$
		Thus, $$ \kappa_\alpha(\rho) \ge \frac{(1-\rho^2)^{1+\alpha}}{(1+\alpha)2^{1+\alpha}}.$$
		
		\noindent \textbf{Estimate of $\kappa_\alpha(-\rho)$:} For $\rho \in [-1, 0]$,$$\kappa_\alpha(-\rho) = \int_{u=-\rho}^1 (1-u^2)^\alpha du.$$
		Setting $s=-\rho$, we have $s\in[0,1]$ and $$\kappa_\alpha(s) = \int_{u=s}^1 (1-u^2)^\alpha du.
		$$ 
		Applying the estimate established above yields
		$$\kappa_\alpha(s) \ge \frac{(1-s^2)^{1+\alpha}}{(1+\alpha)2^{1+\alpha}}.
		$$
		Hence, for every $\rho\in[-1,1]$,
		$$\kappa_\alpha(-\rho) \ge \frac{(1-\rho^2)^{1+\alpha}}{(1+\alpha)2^{1+\alpha}}.$$
		Therefore,
		\begin{align}\label{3}
			I \ge  &\frac{1}{(1+\alpha)2^{1+\alpha}} \sum_{j=0}^1 \left( \int_{\rho=0}^1 F(\rho\xi_j) (1-\rho^2)^{1+\alpha} d\rho + \int_{\rho=-1}^0 F(\rho\xi_j) (1-\rho^2)^{1+\alpha} d\rho \right) \notag \\ 
			&= \frac{1}{(1+\alpha)2^{1+\alpha}} \sum_{j=0}^1 \int_{\rho=-1}^1 F(\rho\xi_j) (1-\rho^2)^{1+\alpha} d\rho \notag \\ 
			&= \frac{1}{(1+\alpha)2^{1+\alpha}} \int_{D_0+D_1} F(z) (1-|z|^2)^{1+\alpha} |dz| \notag\\ 
			&= \frac{1}{(1+\alpha)2^{1+\alpha}} \int_{D_0+D_1} (|h(z)| + |g(z)|)^p (1-|z|^2)^{1+\alpha} |dz| \notag \\ 
			& \geq \frac{1}{(1+\alpha)2^{1+\alpha}} L(f).
		\end{align}
		On the other hand, the right-hand side of \eqref{2} is
		$$\frac{\pi}{(1+\alpha)\left(\sin\frac{\theta}{2} + \cos\frac{\theta}{2}\right)} \int_{\mathbb{D}} (|h(z)| + |g(z)|)^p dA_\alpha(z).$$
		Since for $p>1$, $(|a|+|b|)^p\le 2^{p/2}(|a|^2+|b|^2)^{p/2}$, it follows from Lemma \ref{Lemma-2.1} that
		\begin{align}\label{4}
			\int_{\mathbb{D}}  (|h(z)| + |g(z)|)^p dA_\alpha(z) 
			\le  \frac{2^{p/2}}{(1-|\cos(\frac{\pi}{p})|)^{p/2}} \int_{\mathbb{D}} |h(z) + \overline{g}(z)|^p dA_\alpha(z).
		\end{align}
		Combining \eqref{1}, \eqref{2}, \eqref{3}, and \eqref{4}, we conclude that
		$$L(f) \le \frac{2^{1+\alpha}\, \pi}{\sin\frac{\theta}{2} + \cos\frac{\theta}{2}} \frac{2^{p/2}}{(1-|\cos(\frac{\pi}{p})|)^{p/2}} \|f\|^p_{a_{\alpha}^p}.$$
		This proves the theorem.
	\end{pf}

	\noindent\textbf{Compliance of Ethical Standards:}\\
	\noindent\textbf{Conflict of interest.} The authors declare that there is no conflict  of interest regarding the publication of this paper.
	\vspace{1.5mm}
	
	\noindent\textbf{Data availability statement.}  Data sharing is not applicable to this article as no datasets were generated or analyzed during the current study.\vspace{1.5mm}
	
	\noindent\textbf{Authors contributions.} The authors are listed in alphabetical order by surname. Both authors made equal contributions to the research, the derivation of the results, and the writing and preparation of the manuscript.
	
\end{document}